\definecolor{labelkey}{HTML}{0455BF}
\definecolor{refkey}{rgb}{0,0.6,0.0}
\definecolor{dblue}{HTML}{0455BF}
\definecolor{dgreen}{HTML}{02724A}
\definecolor{myellow}{HTML}{D97904}
\definecolor{dred}{HTML}{D90404}
\renewcommand{\leq}{\ensuremath{\leqslant}}
\renewcommand{\geq}{\ensuremath{\geqslant}}
\newcommand{\Argmin}{\ensuremath{\text{\rm Argmin}\,}}
\newcommand{\minmax}[3]{\ensuremath{%
\underset{\substack{#1}}{\text{\rm minimize}}\;\:
\underset{\substack{#2}}{\text{\rm maximize}}\;\;#3}}
\newcommand{\minimize}[2]{\ensuremath{\underset{\substack{{#1}}}%
{\text{\rm minimize}}\;\;#2}}
\newcommand{\Scal}[2]{\bigg\langle{#1}\;\bigg|\:{#2}\bigg\rangle} 
\newcommand{\scal}[2]{{\langle{{#1}\mid{#2}}\rangle}}
\newcommand{\sscal}[2]{{\big\langle{{#1}\mid{#2}}\big\rangle}}
\newcommand{\menge}[2]{\big\{{#1}~|~{#2}\big\}} 
\newcommand{\Menge}[2]{\left\{{#1}~\middle|~{#2}\right\}} 
\newcommand{\lag}{\ensuremath{\boldsymbol{M}}}
\newcommand{\GGG}{\ensuremath{\boldsymbol{\mathcal{G}}}}
\newcommand{\HHH}{\ensuremath{\boldsymbol{\mathcal{H}}}}
\newcommand{\UUU}{\ensuremath{\boldsymbol{\mathcal{U}}}}
\newcommand{\VVV}{\ensuremath{\boldsymbol{\mathcal{V}}}}
\newcommand{\KKK}{\ensuremath{\boldsymbol{\mathcal{K}}}}
\newcommand{\XXX}{\ensuremath{\boldsymbol{\mathcal{X}}}}
\newcommand{\HH}{\ensuremath{{\mathcal{H}}}}
\newcommand{\XX}{\ensuremath{{\mathcal{X}}}}
\newcommand{\GG}{\ensuremath{{\mathcal{G}}}}
\newcommand{\NN}{\ensuremath{\mathbb{N}}}
\newcommand{\pnabla}[1]{\ensuremath{\nabla_{\!#1}}}
\newcommand{\Sum}{\ensuremath{\displaystyle\sum}}
\newcommand{\emp}{\ensuremath{\varnothing}}
\newcommand{\Id}{\ensuremath{\mathrm{Id}}}
\newcommand{\BI}{\ensuremath{\mathbb{I}}}
\newcommand{\RR}{\ensuremath{\mathbb{R}}}
\newcommand{\RP}{\ensuremath{\left[0,{+}\infty\right[}}
\newcommand{\RPP}{\ensuremath{\left]0,{+}\infty\right[}}
\newcommand{\RPX}{\ensuremath{\left[0,{+}\infty\right]}}
\newcommand{\RX}{\ensuremath{\left]{-}\infty,{+}\infty\right]}}
\newcommand{\weakly}{\ensuremath{\rightharpoonup}}
\newcommand{\exi}{\ensuremath{\exists\,}}
\newcommand{\ran}{\ensuremath{\text{\rm ran}\,}}
\newcommand{\zer}{\text{\rm zer}\,}
\newcommand{\pinf}{\ensuremath{{{+}\infty}}}
\newcommand{\dom}{\ensuremath{\text{\rm dom}\,}}
\newcommand{\prox}{\ensuremath{\text{\rm prox}}}
\newcommand{\gra}{\ensuremath{\text{\rm gra}\,}}
\newcommand{\bdry}{\ensuremath{\text{\rm bdry}\,}}
\newcommand{\sri}{\ensuremath{\text{\rm sri}\,}}
\newcommand{\zeroun}{\ensuremath{\left]0,1\right[}}
\def\abstract{\noindent{\bfseries Abstract}. \ignorespaces}
\newtheorem{theorem}{Theorem}[section]
\newtheorem{proposition}[theorem]{Proposition}
\theoremstyle{plain}{\theorembodyfont{\rmfamily}%
}
\theoremstyle{plain}{\theorembodyfont{\rmfamily}%
\newtheorem{example}[theorem]{Example}}
\theoremstyle{plain}{\theorembodyfont{\rmfamily}%
\newtheorem{remark}[theorem]{Remark}}
\theoremstyle{plain}{\theorembodyfont{\rmfamily}%
}
\theoremstyle{plain}{\theorembodyfont{\rmfamily}%
}
\theoremstyle{plain}{\theorembodyfont{\rmfamily}%
}
\theoremstyle{plain}{\theorembodyfont{\rmfamily}%
\newtheorem{problem}[theorem]{Problem}}
\theoremstyle{plain}{\theorembodyfont{\rmfamily}%
}
\setlist[enumerate]{itemsep=2pt}
\setlist[itemize]{itemsep=2pt}
\numberwithin{equation}{section}
\newcommand*\mute{{\mkern 2mu\cdot\mkern 2mu}}
\newcommand{\email}[1]{\href{mailto:#1}{\nolinkurl{#1}}}
\author{Minh N. B\`ui}
\author{Patrick L. Combettes}
\affil{North Carolina State University,
Department of Mathematics, Raleigh, NC 27695-8205, USA\\
\email{mnbui@ncsu.edu} and \email{plc@math.ncsu.edu}
}
\begin{document}

\title{\sffamily\huge%
Analysis and Numerical Solution of a\\ Modular
Convex Nash Equilibrium Problem\thanks{%
Contact author: P. L. Combettes.
Email: \email{plc@math.ncsu.edu}.
Phone: +1 919 515 2671.
This work was supported by the National Science
Foundation under grant DMS-1818946.}
}

\date{~}

\maketitle

\begin{abstract}
We investigate a modular convex Nash equilibrium problem involving
nonsmooth functions acting on linear mixtures of strategies, as
well as smooth coupling functions. An asynchronous block-iterative
decomposition method is proposed to solve it.
\end{abstract}

\vskip 12mm

\section{Introduction}
\label{sec:1}

We consider a noncooperative game with $p$ players indexed by
$I=\{1,\ldots,p\}$, in which the strategy $x_i$ of player 
$i\in I$ lies in a real Hilbert space $\HH_i$. A strategy 
profile is a point $\boldsymbol{x}=(x_i)_{i\in I}$ in the Hilbert
direct sum $\HHH=\bigoplus_{i\in I}\HH_i$, and the associated
profile of the players other than $i\in I$ is the vector
$\boldsymbol{x}_{\smallsetminus i}=(x_j)_{j\in
I\smallsetminus\{i\}}$ in $\HHH_{\smallsetminus i}=
\bigoplus_{j\in I \smallsetminus\{i\}}\HH_j$. For every $i\in I$ 
and every $(x_i,\boldsymbol{y})\in\HH_i\times\HHH$, we set
$(x_i;\boldsymbol{y}_{\smallsetminus i})=(y_1,\ldots,y_{i-1},x_i,
y_{i+1},\ldots,y_p)$. Given a real Hilbert space $\HH$, we denote
by $\Gamma_0(\HH)$ the class of lower semicontinuous convex
functions $\varphi\colon\HH\to\RX$ which are proper in the sense
that $\dom\varphi=\menge{x\in\HH}{\varphi(x)<\pinf}\neq\emp$.

A fundamental equilibrium notion was introduced by Nash in
\cite{Nash50,Nash51} to describe a state in which the loss of each
player cannot be reduced by unilateral deviation.
A general formulation of the Nash equilibrium problem is
\begin{equation}
\label{e:0}
\text{find}\;\:\boldsymbol{x}
\in\HHH\;\:\text{such that}\;\:
(\forall i\in I)\;\;{x}_i\in\Argmin
{\boldsymbol{\ell}_i(\mute;
\boldsymbol{x}_{\smallsetminus i})},
\end{equation}
where $\boldsymbol{\ell}_i\colon\HHH\to\RX$ is the global loss
function of player $i\in I$. We make the following
assumption: for every $i\in I$ and every
$\boldsymbol{x}\in\HHH$, the function
$\boldsymbol{\ell}_i(\mute;\boldsymbol{x}_{\smallsetminus i})$ is
convex. Such convex Nash equilibrium problems have been
studied since the early 1970s \cite{Bens72}; see
\cite{Atto08,Boer21,Borz13,Bric13,Cohe87,Sign21,Comi12,%
Facc07,Gaut21,Heus09,Sori16} for further work.
We consider the following modular formulation of \eqref{e:0},
wherein the functions $(\boldsymbol{\ell}_i)_{i\in I}$ are
decomposed into elementary components. This decomposition will
provide more modeling flexibility and lead to efficient
solution methods.

\begin{problem}
\label{prob:1}
Let $(\HH_i)_{i\in I}$ and $(\GG_k)_{k\in K}$
be finite families of real Hilbert spaces, and set
$\HHH=\bigoplus_{i\in I}\HH_i$ and
$\GGG=\bigoplus_{k\in K}\GG_k$.
Suppose that the following are satisfied:
\begin{enumerate}[label={\rm[\alph*]}]
\item
\label{prob:1a}
For every $i\in I$, $\varphi_i\in\Gamma_0(\HH_i)$.
\item
\label{prob:1b}
For every $i\in I$, $\boldsymbol{f}_{\!i}\colon\HHH\to\RR$ is such
that, for every $\boldsymbol{x}\in\HHH$, $\boldsymbol{f}_{\!i}
(\mute;\boldsymbol{x}_{\smallsetminus i})\colon\HH_i\to\RR$ is
convex and differentiable, and we denote its gradient at $x_i$ by
$\pnabla{i}\boldsymbol{f}_{\!i}(\boldsymbol{x})$. Further, the
operator
$\boldsymbol{G}\colon\HHH\to\HHH\colon\boldsymbol{x}\mapsto
(\pnabla{i}\boldsymbol{f}_{\!i}(\boldsymbol{x}))_{i\in I}$ is
monotone and Lipschitzian.
\item
\label{prob:1c}
For every $k\in K$, $g_k\in\Gamma_0(\GG_k)$ and
$\boldsymbol{L}_k\colon\HHH\to\GG_k$ is linear and bounded.
\end{enumerate}
The goal is to
\begin{equation}
\label{e:nash}
\text{find}\;\:\boldsymbol{x}\in\HHH\;\:
\text{such that}\;\:(\forall i\in I)\;\;
x_i\in\Argmin{\varphi_i+\boldsymbol{f}_{\!i}(\mute;
\boldsymbol{x}_{\smallsetminus i})
+\sum_{k\in K}(g_k\circ\boldsymbol{L}_k)(\mute;
\boldsymbol{x}_{\smallsetminus i})}.
\end{equation}
\end{problem}

In Problem~\ref{prob:1}, the individual loss of player $i\in I$ is
a nondifferentiable function $\varphi_i$, while his joint loss is
decomposed into a differentiable function $\boldsymbol{f}_{\!i}$
and a sum of nonsmooth functions $(g_k)_{k\in K}$ acting on linear
mixtures of the strategies. To the best of our knowledge, such a
general formulation of a convex Nash equilibrium has not been
considered in the literature. As will be seen in
Section~\ref{sec:2}, it constitutes a flexible framework that
subsumes a variety of existing equilibrium models. In
Section~\ref{sec:3}, we embed Problem~\ref{prob:1} in an 
inclusion problem in the bigger space $\HHH\oplus\GGG$, and we 
employ the new problem to provide conditions for the existence of
solutions to \eqref{e:nash}. This embedding is also exploited in
Section~\ref{sec:4} to devise an asynchronous block-iterative
algorithm to solve Problem~\ref{prob:1}. The proposed method
features several innovations that are particularly relevant in
large-scale problems: first, each function and each linear operator
in \eqref{e:nash} is activated separately; second, only a subgroup
of functions needs to be activated at any iteration; third, the
computations are asynchronous in the sense that the result of
calculations initiated at earlier iterations can be incorporated at
the current one.

\section{Notation}
\label{sec:2-}
General background on monotone operators and related notions can
be found in \cite{Livre1}. 
Let $\HH$ be a real Hilbert space.
We denote by $2^{\HH}$ the power set of $\HH$ and by $\Id$ the
identity operator on $\HH$. Let $A\colon\HH\to 2^{\HH}$. The
domain of $A$ is $\dom A=\menge{x\in\HH}{Ax\neq\emp}$, the range
of $A$ is $\ran A=\bigcup_{x\in\dom A}Ax$, the graph of $A$ is
$\gra A=\menge{(x,x^*)\in\HH\times\HH}{x^*\in Ax}$, the set of
zeros of $A$ is $\zer A=\menge{x\in\HH}{0\in Ax}$, the inverse
of $A$ is $A^{-1}\colon\HH\to 2^{\HH}\colon x^*\mapsto
\menge{x\in\HH}{x^*\in Ax}$, and the resolvent of $A$ is
$J_A=(\Id+A)^{-1}$. Now suppose that $A$ is monotone, that is, 
\begin{equation}
\big(\forall(x,x^*)\in\gra A\big)
\big(\forall(y,y^*)\in\gra A\big)\quad
\scal{x-y}{x^*-y^*}\geq 0.
\end{equation}
Then $A$ is maximally monotone if, for every
monotone operator $\widetilde{A}\colon\HH\to 2^{\HH}$,
$\gra A\subset\gra\widetilde{A}$ $\Rightarrow$
$A=\widetilde{A}$; $A$ is strongly monotone with constant
$\alpha\in\RPP$ if $A-\alpha\Id$ is monotone; and $A$ is $3^*$
monotone if 
\begin{equation}
(\forall x\in\dom A)(\forall x^*\in\ran A)\quad
\sup_{(y,y^*)\in\gra A}\scal{x-y}{y^*-x^*}<\pinf.
\end{equation}
Let $\varphi\in\Gamma_0(\HH)$. Then $\varphi$ is supercoercive if
$\lim_{\|x\|\to\pinf}\varphi(x)/\|x\|=\pinf$
and uniformly convex if there exists an increasing function
$\phi\colon\RP\to\RPX$ that vanishes only at $0$ such that
\begin{multline}
(\forall x\in\dom\varphi)(\forall y\in\dom\varphi)
(\forall\alpha\in\zeroun)\\
\varphi\big(\alpha x+(1-\alpha)y\big)+\alpha(1-\alpha)
\phi\big(\|x-y\|\big)\leq\alpha\varphi(x)
+(1-\alpha)\varphi(y).
\end{multline}
For every $x\in\HH$, $\prox_\varphi x$ denotes the unique
minimizer of $\varphi+(1/2)\|\mute-x\|^2$. The subdifferential of
$\varphi$ is the maximally monotone operator
\begin{equation}
\partial\varphi\colon\HH\to 2^\HH\colon
x\mapsto\menge{x^*\in\HH}{(\forall y\in\HH)\;\;
\scal{y-x}{x^*}+\varphi(x)\leq\varphi(y)}.
\end{equation}
Finally, given a nonempty convex subset $C$ of $\HH$,
the indicator function of $C$ is
\begin{equation}
\iota_C\colon\HH\to\RPX\colon x\mapsto
\begin{cases}
0,&\text{if}\;\:x\in C;\\
\pinf,&\text{otherwise,}
\end{cases}
\end{equation}
and the strong relative interior of $C$ is
\begin{equation}
\sri C=\Menge{x\in C}{\bigcup_{\lambda\in\,\RPP}\lambda(C-x)
\;\:\text{is a closed vector subspace of}\;\:\HH}.
\end{equation}

\section{Instantiations of Problem~\ref{prob:1}}
\label{sec:2}

Throughout this section, $\HH$ is a real Hilbert space.
We illustrate the wide span of Problem~\ref{prob:1} by
showing that common formulations encountered in
various fields can be recast as special cases of it.

\begin{example}[quadratic coupling]
\label{ex:2}
Let $I$ be a nonempty finite set. For every $i\in I$,
let $\varphi_i\in\Gamma_0(\HH)$,
let $\Lambda_i$ be a nonempty finite set,
let $(\omega_{i,\ell,j})_{\ell\in\Lambda_i,j\in
I\smallsetminus\{i\}}$ be in $\RP$,
and let $(\kappa_{i,\ell})_{\ell\in\Lambda_i}$ be in $\RPP$.
Additionally, set $\HHH=\bigoplus_{i\in I}\HH$.
The problem is to
\begin{equation}
\label{e:nash4}
\text{find}\;\:\boldsymbol{x}\in\HHH\;\:
\text{such that}\;\:(\forall i\in I)\;\;{x}_i\in
\Argmin{\varphi_i+
\sum_{\ell\in\Lambda_i}\frac{\kappa_{i,\ell}}{2}\Bigg\|
\mute-\sum_{j\in I\smallsetminus \{i\}}
\omega_{i,\ell,j}{x}_j\Bigg\|^2}.
\end{equation}
It is assumed that
\begin{equation}
\label{e:2096}
(\forall\boldsymbol{x}\in\HHH)(\forall\boldsymbol{y}\in\HHH)\quad
\sum_{i\in I}\sum_{\ell\in\Lambda_i}\kappa_{i,\ell}
\Scal{x_i-y_i}{x_i-y_i-\sum_{j\in I\smallsetminus\{i\}}
\omega_{i,\ell,j}(x_j-y_j)}\geq 0.
\end{equation}
Define
\begin{equation}
\label{e:2287}
(\forall i\in I)\quad\boldsymbol{f}_{\!i}\colon\HHH\to\RR\colon
\boldsymbol{x}\mapsto
\sum_{\ell\in\Lambda_i}\frac{\kappa_{i,\ell}}{2}\Bigg\|
x_i-\sum_{j\in I\smallsetminus\{i\}}\omega_{i,\ell,j}x_j\Bigg\|^2.
\end{equation}
Then, for every $i\in I$ and every $\boldsymbol{x}\in\HHH$,
$\boldsymbol{f}_{\!i}(\mute;\boldsymbol{x}_{\smallsetminus i})$
is convex and differentiable with
\begin{equation}
\pnabla{i}\boldsymbol{f}_{\!i}(\boldsymbol{x})
=\sum_{\ell\in\Lambda_i}\kappa_{i,\ell}\Bigg(
x_i-\sum_{j\in I\smallsetminus\{i\}}\omega_{i,\ell,j}x_j\Bigg).
\end{equation}
Hence, in view of \eqref{e:2096}, the operator
$\boldsymbol{G}\colon\HHH\to\HHH\colon
\boldsymbol{x}\mapsto
(\pnabla{i}\boldsymbol{f}_{\!i}(\boldsymbol{x}))_{i\in I}$
is monotone and Lipschitzian.
Thus, \eqref{e:nash4} is a special case of \eqref{e:nash}
with $K=\emp$ and $(\forall i\in I)$ $\HH_i=\HH$.
This scenario unifies models found in \cite{Acke80,Baus21,Sign21}.
\end{example}

\begin{example}
\label{ex:23}
In \eqref{e:nash4}, suppose that, for every $i\in I$, 
$C_i$ is a nonempty closed convex subset of $\HH_i$,
$\varphi_i=\iota_{C_i}$, $\Lambda_i=\{1\}$, and
$\kappa_{i,1}=1$. Then \eqref{e:nash4} becomes
\begin{equation}
\label{e:nash14}
\text{find}\;\:\boldsymbol{x}\in\HHH\;\:
\text{such that}\;\:(\forall i\in I)\;\;{x}_i\in
\Argmin_{\!C_i}{\Bigg\|\mute-\sum_{j\in I\smallsetminus \{i\}}
\omega_{i,1,j}{x}_j\Bigg\|^2}.
\end{equation}
In addition, \eqref{e:2096} is satisfied when 
\begin{equation}
\label{e:h4}
\begin{cases}
(\forall i\in I)\;\;\sum_{j\in I\smallsetminus\{i\}}
\omega_{i,1,j}\leq 1\\
(\forall j\in I)\;\;\sum_{i\in I\smallsetminus\{j\}}
\omega_{i,1,j}\leq 1,
\end{cases}
\end{equation}
which places us in the setting of Example~\ref{ex:2}. The
formulation \eqref{e:nash14}--\eqref{e:h4} unifies models found in
\cite{Bail12}.
\end{example}

\begin{example}[minimax]
\label{ex:4}
Let $I$ be a finite set and suppose that $\emp\neq J\subset I$.
Let $(\HH_i)_{i\in I}$ be real Hilbert spaces, and set
$\UUU=\bigoplus_{i\in I\smallsetminus J}\HH_i$ and
$\VVV=\bigoplus_{j\in J}\HH_j$.
For every $i\in I$, let $\varphi_i\in\Gamma_0(\HH_i)$.
Further, let
$\boldsymbol{\EuScript{L}}\colon\UUU\oplus\VVV\to\RR$ be
differentiable with a Lipschitzian gradient and 
such that, for every $\boldsymbol{u}\in\UUU$ and every
$\boldsymbol{v}\in\VVV$, the functions
${-}\boldsymbol{\EuScript{L}}(\boldsymbol{u},\mute)$ and 
$\boldsymbol{\EuScript{L}}(\mute,\boldsymbol{v})$ are
convex. Consider the multivariate minimax problem
\begin{equation}
\label{e:mima}
\minmax{\boldsymbol{u}\in\UUU}{\boldsymbol{v}\in\VVV}
{\sum_{i\in I\smallsetminus J}\varphi_i(u_i)
+\boldsymbol{\EuScript{L}}(\boldsymbol{u},\boldsymbol{v})
-\sum_{j\in J}\varphi_j(v_j)}.
\end{equation}
Now set $\HHH=\UUU\oplus\VVV$ and define
\begin{equation}
(\forall i\in I)\quad
\boldsymbol{f}_{\!i}\colon\HHH\to\RR\colon
(\boldsymbol{u},\boldsymbol{v})\mapsto
\begin{cases}
\boldsymbol{\EuScript{L}}(\boldsymbol{u},\boldsymbol{v}),
&\text{if}\;\:i\in I\smallsetminus J;\\
{-}\boldsymbol{\EuScript{L}}(\boldsymbol{u},\boldsymbol{v}),
&\text{if}\;\:i\in J.
\end{cases}
\end{equation}
Then $\HHH=\bigoplus_{i\in I}\HH_i$ and \eqref{e:mima} can be put
in the form
\begin{equation}
\text{find}\;\:\boldsymbol{x}\in\HHH\;\:
\text{such that}\;\:(\forall i\in I)\;\;
{x}_i\in\Argmin{\varphi_i+\boldsymbol{f}_{\!i}(\mute;
\boldsymbol{x}_{\smallsetminus i})}.
\end{equation}
Let us verify Problem~\ref{prob:1}\ref{prob:1b}.
On the one hand, we have
\begin{equation}
\label{e:78}
(\forall i\in I)(\forall\boldsymbol{x}\in\HHH)\quad
\pnabla{i}\boldsymbol{f}_{\!i}(\boldsymbol{x})=
\begin{cases}
\pnabla{i}\boldsymbol{\EuScript{L}}(\boldsymbol{x}),
&\text{if}\;\:i\in I\smallsetminus J;\\
{-}\pnabla{i}\boldsymbol{\EuScript{L}}(\boldsymbol{x}),
&\text{if}\;\:i\in J.
\end{cases}
\end{equation}
Hence, the operator
\begin{equation}
\boldsymbol{G}\colon\HHH\to\HHH\colon
\boldsymbol{x}\mapsto
\big(\pnabla{i}\boldsymbol{f}_{\!i}(\boldsymbol{x})\big)_{i\in I}=
\Big(\big(\pnabla{i}\boldsymbol{\EuScript{L}}(\boldsymbol{x})
\big)_{i\in I\smallsetminus J},
\big({-}\pnabla{j}\boldsymbol{\EuScript{L}}
(\boldsymbol{x})\big)_{j\in J}\Big)
\end{equation}
is monotone \cite{Rock70,Roc71d} and Lipschitzian. Consequently,
\eqref{e:mima} is an instantiation of \eqref{e:nash}. Special cases
of \eqref{e:mima} under the above assumptions can be found in 
\cite{Sign21,Mont15,Nemi04,Rock95,Thek19,Yoon21}.
\end{example}

\begin{example}[``generalized'' Nash equilibria]
\label{ex:28}
Consider the setting of Problem~\ref{prob:1} where \ref{prob:1a} 
and \ref{prob:1c} are respectively specialized to
\begin{enumerate}[label={\rm[\alph*']}]
\item
For every $i\in I$, $\varphi_i=\iota_{C_i}$, where
$C_i$ is a nonempty closed convex subset of $\HH_i$.
\setcounter{enumi}{2}
\item
$K=\{1\}$ and $g_1=\iota_{D_1}$,
where $D_1$ is a nonempty closed convex subset of $\GG_1$.
\end{enumerate}
Then \eqref{e:nash} reduces to
\begin{equation}
\label{e:gn}
\text{find}\;\:\boldsymbol{x}\in\HHH\;\:
\text{such that}\;\:(\forall i\in I)\;\;
{x}_i\in\Argmin_{C_i}{\;\boldsymbol{f}_{\!i}(\mute;
\boldsymbol{x}_{\smallsetminus i})+
(\iota_{D_1}\circ\boldsymbol{L}_1)(\mute;
\boldsymbol{x}_{\smallsetminus i})}.
\end{equation}
This formulation is often referred to as a generalized Nash 
equilibrium; see, e.g., \cite{Facc07,Heus09,Kanz19}.
However, as noted in \cite{Rock11}, it is really a standard Nash
equilibrium in the sense of \eqref{e:0} since functions are allowed
to take the value $\pinf$.
\end{example}

\begin{example}[PDE model]
\label{ex:29}
Let $\Omega$ be a nonempty open bounded subset of $\RR^N$.
In Example~\ref{ex:28}, suppose that, for every $i\in I$,
$\HH_i=L^2(\Omega)$. Let $z\in L^2(\Omega)$,
let $(\Omega_i)_{i\in I}$ be nonempty open subsets of $\Omega$
with characteristic functions $(1_{\Omega_i})_{i\in I}$, and, for
every $\boldsymbol{x}\in\HHH$, let $\boldsymbol{S}\boldsymbol{x}$
be the unique weak solution in $H_0^1(\Omega)$ of the Dirichlet
boundary value problem \cite[Chapter~IV.2.1]{Ekel74}
\begin{equation}
\label{e:bdrys}
\begin{cases}
-\Delta y=z+\Sum_{i\in I}1_{\Omega_i}x_i,&\text{on}\;\:\Omega;\\
y=0,&\text{on}\;\:\bdry\Omega.
\end{cases}
\end{equation}
For every $i\in I$, let $r_i\in\HH_i$, let $\alpha_i\in\RPP$,
and suppose that
\begin{equation}
\boldsymbol{f}_{\!i}
\colon\boldsymbol{x}\mapsto
\frac{\alpha_i}{2}\|x_i\|_{\HH_i}^2+
\frac{1}{2}\|\boldsymbol{S}\boldsymbol{x}-r_i\|_{\HH_i}^2.
\end{equation}
In addition, suppose that $\GG_1=H_0^1(\Omega)$ and
$\boldsymbol{L}_1=\boldsymbol{S}$. Then we recover frameworks
investigated in \cite{Borz13,Kanz19}.
\end{example}

\begin{example}[multivariate minimization]
\label{ex:3}
Consider the setting of Problem~\ref{prob:1} where \ref{prob:1b} 
and \ref{prob:1c} are respectively specialized to
\begin{enumerate}[label={\rm[\alph*']}]
\setcounter{enumi}{1}
\item
For every $i\in I$, $\boldsymbol{f}_{\!i}=\boldsymbol{f}$,
where $\boldsymbol{f}\colon\KKK\to\RR$ is a differentiable convex 
function such that $\boldsymbol{G}=\nabla\boldsymbol{f}$ is
Lipschitzian. 
\item
For every $k\in K$, $g_k\colon\GG_k\to\RR$ is convex and G\^ateaux
differentiable, and $\boldsymbol{L}_k\colon
\HHH\to\GG_k\colon\boldsymbol{x}\mapsto
\sum_{j\in I}L_{k,j}x_j$ where, for every $j\in I$, 
$L_{k,j}\colon\HH_j\to\GG_k$ is linear and bounded.
\end{enumerate}
Then \eqref{e:nash} reduces to the multivariate minimization
problem 
\begin{equation}
\label{e:6887}
\minimize{\boldsymbol{x}\in\HHH}{\sum_{i\in I}\varphi_i(x_i)+
\boldsymbol{f}(\boldsymbol{x})+
\sum_{k\in K}g_k\Bigg(\sum_{j\in I}L_{k,j}x_j\Bigg)}.
\end{equation}
Instances of this problem are found in 
\cite{Argy12,Atto08,Bric19,Bric09,Jmiv11,Darb20,Habb13}.
\end{example}

\section{Existence of solutions}
\label{sec:3}

Our first existence result revolves around an embedding of
Problem~\ref{prob:1} in a larger inclusion problem in the space
$\HHH\oplus\GGG$.

\begin{proposition}
\label{p:1}
Consider the setting of Problem~\ref{prob:1} and set
$(\forall i\in I)$
$\Pi_i\colon\HHH\to\HH_i\colon\boldsymbol{x}\mapsto x_i$.
Suppose that 
$(\overline{\boldsymbol{x}},\overline{\boldsymbol{v}}^*)
\in\HHH\oplus\GGG$ satisfies
\begin{equation}
\label{e:kt}
\begin{cases}
(\forall i\in I)\;\;
-\pnabla{i}\boldsymbol{f}_{\!i}(\overline{\boldsymbol{x}})-
\Sum_{k\in K}\Pi_i(\boldsymbol{L}_k^*\overline{v}_k^*)
\in\partial\varphi_i(\overline{x}_i)\\
(\forall k\in K)\;\;
\boldsymbol{L}_k\overline{\boldsymbol{x}}
\in\partial g_k^*(\overline{v}_k^*).
\end{cases}
\end{equation}
Then $\overline{\boldsymbol{x}}$ solves \eqref{e:nash}.
\end{proposition}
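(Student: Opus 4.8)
The plan is to fix an arbitrary player $i\in I$ and to establish directly, through first-order subgradient inequalities, that $\overline{x}_i$ minimizes the convex function
\begin{equation*}
h_i=\varphi_i+\boldsymbol{f}_{\!i}(\mute;\overline{\boldsymbol{x}}_{\smallsetminus i})
+\sum_{k\in K}g_k\big(\boldsymbol{L}_k(\mute;\overline{\boldsymbol{x}}_{\smallsetminus i})\big),
\end{equation*}
which is precisely what \eqref{e:nash} demands. I would deliberately avoid invoking a subdifferential sum-and-chain rule for $h_i$, as that would impose a constraint qualification on the composition terms; instead I would add up three elementary subgradient inequalities and watch the linear parts telescope to zero.

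The delicate term is the composition. Since $g_k\in\Gamma_0(\GG_k)$, the inverse-subdifferential identity turns the second line of \eqref{e:kt} into $\overline{v}_k^*\in\partial g_k(\boldsymbol{L}_k\overline{\boldsymbol{x}})$. Fixing $x_i\in\HH_i$ and writing $\boldsymbol{L}_k\overline{\boldsymbol{x}}=\boldsymbol{L}_k(\overline{x}_i;\overline{\boldsymbol{x}}_{\smallsetminus i})$, the subgradient inequality for $g_k$ at $\boldsymbol{L}_k\overline{\boldsymbol{x}}$, evaluated at $\boldsymbol{L}_k(x_i;\overline{\boldsymbol{x}}_{\smallsetminus i})$, produces a lower bound whose linear contribution carries the increment $\boldsymbol{L}_k(x_i-\overline{x}_i;0)$. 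The crux is the adjoint bookkeeping that connects this to \eqref{e:kt}: the partial operator $L_{k,i}\colon\HH_i\to\GG_k\colon x_i\mapsto\boldsymbol{L}_k(x_i;0)$ factors as $L_{k,i}=\boldsymbol{L}_k E_i$, where $E_i\colon\HH_i\to\HHH$ is the canonical injection with adjoint $E_i^*=\Pi_i$, whence $L_{k,i}^*=\Pi_i\boldsymbol{L}_k^*$. This is exactly the operator appearing in the first line of \eqref{e:kt}, so the linear contribution rewrites as $\scal{x_i-\overline{x}_i}{\Pi_i(\boldsymbol{L}_k^*\overline{v}_k^*)}$.

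The remaining two inequalities are immediate: the first line of \eqref{e:kt} gives the subgradient inequality for $\varphi_i$ at $\overline{x}_i$ with subgradient $-\pnabla{i}\boldsymbol{f}_{\!i}(\overline{\boldsymbol{x}})-\sum_{k\in K}\Pi_i(\boldsymbol{L}_k^*\overline{v}_k^*)$, and the convexity and differentiability of $\boldsymbol{f}_{\!i}(\mute;\overline{\boldsymbol{x}}_{\smallsetminus i})$ in Problem~\ref{prob:1}\ref{prob:1b} give the gradient inequality with slope $\pnabla{i}\boldsymbol{f}_{\!i}(\overline{\boldsymbol{x}})$. Summing these two with the $\sum_{k\in K}$ of the composition inequality, the three linear terms $\scal{x_i-\overline{x}_i}{-\pnabla{i}\boldsymbol{f}_{\!i}(\overline{\boldsymbol{x}})-\sum_{k\in K}\Pi_i(\boldsymbol{L}_k^*\overline{v}_k^*)}$, $\scal{x_i-\overline{x}_i}{\pnabla{i}\boldsymbol{f}_{\!i}(\overline{\boldsymbol{x}})}$, and $\sum_{k\in K}\scal{x_i-\overline{x}_i}{\Pi_i(\boldsymbol{L}_k^*\overline{v}_k^*)}$ cancel identically, leaving $h_i(x_i)\geq h_i(\overline{x}_i)$. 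Since $x_i$ and $i$ are arbitrary, $\overline{x}_i\in\Argmin h_i$ for every $i\in I$, i.e., $\overline{\boldsymbol{x}}$ solves \eqref{e:nash}. I expect the only genuine obstacle to be the adjoint identity $L_{k,i}^*=\Pi_i\boldsymbol{L}_k^*$ that ties the composition increment to the multiplier term of \eqref{e:kt}; the rest is routine.
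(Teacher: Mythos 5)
Your proof is correct and takes essentially the same route as the paper's: the same decomposition of the $i$th objective into $\varphi_i$, $\boldsymbol{f}_{\!i}(\mute;\overline{\boldsymbol{x}}_{\smallsetminus i})$, and the partial compositions, the same conversion of $\boldsymbol{L}_k\overline{\boldsymbol{x}}\in\partial g_k^*(\overline{v}_k^*)$ into $\overline{v}_k^*\in\partial g_k(\boldsymbol{L}_k\overline{\boldsymbol{x}})$, and the same adjoint bookkeeping identifying $\Pi_i$ with the adjoint of the canonical injection. The only difference is presentational---where you sum explicit subgradient inequalities, the paper invokes the qualification-free inclusions $\boldsymbol{L}_k^*\big(\partial g_k(\boldsymbol{L}_k\mute)\big)\subset\partial(g_k\circ\boldsymbol{L}_k)$ and $\partial\varphi_i+\nabla f_i+\sum_{k\in K}\partial\widetilde{g}_k\subset\partial\big(\varphi_i+f_i+\sum_{k\in K}\widetilde{g}_k\big)$ together with Fermat's rule, so your stated reason for avoiding the sum-and-chain rules is moot: the inclusion direction the paper uses holds without any constraint qualification.
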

\begin{proof}
Take $i\in I$ and set
\begin{equation}
\label{e:8920}
f_i=\boldsymbol{f}_{\!i}(\mute;
\overline{\boldsymbol{x}}_{\smallsetminus i}),
\quad\overline{\boldsymbol{s}}_i=(0;
\overline{\boldsymbol{x}}_{\smallsetminus i}),
\quad\text{and}\quad
(\forall k\in K)\;\;
\widetilde{g}_k=(g_k\circ\boldsymbol{L}_k)(\mute;
\overline{\boldsymbol{x}}_{\smallsetminus i}).
\end{equation}
Then, by Problem~\ref{prob:1}\ref{prob:1b},
$f_i\colon\HH_i\to\RR$ is convex and G\^ateaux differentiable,
and $\nabla f_i(\overline{x}_i)=\pnabla{i}\boldsymbol{f}_{\!i}
(\overline{\boldsymbol{x}})$.
At the same time, 
\begin{equation}
(\forall k\in K)(\forall x_i\in\HH_i)\quad
\widetilde{g}_k(x_i)
=(g_k\circ\boldsymbol{L}_k)(\Pi_i^*x_i+
\overline{\boldsymbol{s}}_i)
=g_k(\boldsymbol{L}_k(\Pi_i^*x_i)+
\boldsymbol{L}_k\overline{\boldsymbol{s}}_i)
\end{equation}
and it thus results from 
\cite[Proposition~16.6(ii)]{Livre1} that
\begin{equation}
(\forall k\in K)(\forall x_i\in\HH_i)\quad
(\Pi_i\circ\boldsymbol{L}_k^*)\big(\partial g_k(
\boldsymbol{L}_k(\Pi_i^*x_i)+
\boldsymbol{L}_k\overline{\boldsymbol{s}}_i)\big)
\subset\partial\widetilde{g}_k(x_i).
\end{equation}
In particular,
\begin{equation}
(\forall k\in K)\quad
(\Pi_i\circ\boldsymbol{L}_k^*)\big(\partial g_k(
\boldsymbol{L}_k\overline{\boldsymbol{x}})\big)
=(\Pi_i\circ\boldsymbol{L}_k^*)\Big(\partial g_k\big(
\boldsymbol{L}_k(\Pi_i^*\overline{x}_i)+
\boldsymbol{L}_k\overline{\boldsymbol{s}}_i\big)\Big)
\subset\partial\widetilde{g}_k(\overline{x}_i).
\end{equation}
Hence, we deduce from \eqref{e:kt} and
\cite[Proposition~16.6(ii)]{Livre1} that
\begin{align}
0&\in\partial\varphi_i(\overline{x}_i)+
\pnabla{i}\boldsymbol{f}_{\!i}(\overline{\boldsymbol{x}})+
\sum_{k\in K}\Pi_i(\boldsymbol{L}_k^*\overline{v}_k^*)
\nonumber\\
&\subset\partial\varphi_i(\overline{x}_i)+
\nabla f_i(\overline{x}_i)+
\sum_{k\in K}(\Pi_i\circ\boldsymbol{L}_k^*)\big(
\partial g_k(\boldsymbol{L}_k\overline{\boldsymbol{x}})\big)
\nonumber\\
&\subset\partial\varphi_i(\overline{x}_i)+
\nabla f_i(\overline{x}_i)+
\sum_{k\in K}\partial\widetilde{g}_k(\overline{x}_i)
\nonumber\\
&\subset\partial\bigg(\varphi_i+f_i+
\sum_{k\in K}\widetilde{g}_k\bigg)
(\overline{x}_i).
\end{align}
Consequently, appealing to Fermat's rule
\cite[Theorem~16.3]{Livre1} and \eqref{e:8920}, we arrive at
\begin{equation}
\overline{x}_i\in
\Argmin{\varphi_i+\boldsymbol{f}_{\!i}(\mute;
\overline{\boldsymbol{x}}_{\smallsetminus i})
+\sum_{k\in K}(g_k\circ\boldsymbol{L}_k)(\mute;
\overline{\boldsymbol{x}}_{\smallsetminus i})},
\end{equation}
which completes the proof.
\end{proof}

We are now in a position to provide specific existence conditions.

\begin{proposition}
\label{p:9}
Consider the setting of Problem~\ref{prob:1}, set
\begin{equation}
\label{e:3038}
\boldsymbol{C}=\menge{(\boldsymbol{L}_k\boldsymbol{x}-
y_k)_{k\in K}}{(\forall i\in I)\;\;x_i\in\dom\varphi_i\;\:
\text{and}\;\:(\forall k\in K)\;\;y_k\in\dom g_k},
\end{equation}
and let $\boldsymbol{Z}\subset\HHH\oplus\GGG$ be the set of
solutions to \eqref{e:kt}. Suppose that
$\boldsymbol{0}\in\sri\boldsymbol{C}$ and that one of the following
is satisfied: 
\begin{enumerate}
\item 
\label{p:9a}
For every $i\in I$, one of the following holds:
\begin{enumerate}[label={\rm\arabic*/}]
\item
\label{p:9a1}
$\partial\varphi_i$ is surjective.
\item
\label{p:9a2}
$\varphi_i$ is supercoercive.
\item
\label{p:9a3}
$\dom\varphi_i$ is bounded.
\item
\label{p:9a4}
$\varphi_i$ is uniformly convex.
\end{enumerate}
\item
\label{p:9b}
$\boldsymbol{G}\colon\HHH\to\HHH\colon\boldsymbol{x}\mapsto
(\pnabla{i}\boldsymbol{f}_{\!i}(\boldsymbol{x}))_{i\in I}$ 
is $3^*$ monotone and surjective.
\end{enumerate}
Then $\boldsymbol{Z}\neq\emp$ and Problem~\ref{prob:1}
has a solution.
\end{proposition}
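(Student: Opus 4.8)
The plan is to rewrite \eqref{e:kt} as a single monotone inclusion, to exhibit a zero of it, and then to invoke Proposition~\ref{p:1}. Introduce the separable functions $\boldsymbol{\varphi}\colon\HHH\to\RX\colon\boldsymbol{x}\mapsto\sum_{i\in I}\varphi_i(x_i)$ and $\boldsymbol{g}\colon\GGG\to\RX\colon\boldsymbol{v}\mapsto\sum_{k\in K}g_k(v_k)$, which lie in $\Gamma_0(\HHH)$ and $\Gamma_0(\GGG)$, together with the bounded linear operator $\boldsymbol{L}\colon\HHH\to\GGG\colon\boldsymbol{x}\mapsto(\boldsymbol{L}_k\boldsymbol{x})_{k\in K}$, whose adjoint is $\boldsymbol{L}^*\colon\boldsymbol{v}^*\mapsto\sum_{k\in K}\boldsymbol{L}_k^*v_k^*$. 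Then $\boldsymbol{C}=\boldsymbol{L}(\dom\boldsymbol{\varphi})-\dom\boldsymbol{g}$, so that $\boldsymbol{0}\in\sri\boldsymbol{C}$ is exactly the standard qualification condition for the sum and chain rules applied to the pair $(\boldsymbol{\varphi},\boldsymbol{g}\circ\boldsymbol{L})$, and \eqref{e:kt} becomes the primal--dual system $-\boldsymbol{G}\overline{\boldsymbol{x}}-\boldsymbol{L}^*\overline{\boldsymbol{v}}^*\in\partial\boldsymbol{\varphi}(\overline{\boldsymbol{x}})$ and $\boldsymbol{L}\overline{\boldsymbol{x}}\in\partial\boldsymbol{g}^*(\overline{\boldsymbol{v}}^*)$.

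First I would use $\boldsymbol{0}\in\sri\boldsymbol{C}$ to reduce \eqref{e:kt} to a primal inclusion. Since $\boldsymbol{0}\in\boldsymbol{C}$, the function $h=\boldsymbol{\varphi}+\boldsymbol{g}\circ\boldsymbol{L}$ lies in $\Gamma_0(\HHH)$, and the subdifferential sum and chain rules \cite{Livre1} give $\partial h=\partial\boldsymbol{\varphi}+\boldsymbol{L}^*\circ(\partial\boldsymbol{g})\circ\boldsymbol{L}$. Hence $\boldsymbol{Z}\neq\emp$ if and only if $\zer(\partial h+\boldsymbol{G})\neq\emp$: the forward implication is immediate, while for the reverse one a point $\overline{\boldsymbol{x}}$ with $\boldsymbol{0}\in\partial h(\overline{\boldsymbol{x}})+\boldsymbol{G}\overline{\boldsymbol{x}}$ yields, through the decomposition of $\partial h$, a dual vector $\overline{\boldsymbol{v}}^*\in\partial\boldsymbol{g}(\boldsymbol{L}\overline{\boldsymbol{x}})$ that completes $(\overline{\boldsymbol{x}},\overline{\boldsymbol{v}}^*)\in\boldsymbol{Z}$. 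It therefore suffices to prove that $\boldsymbol{T}=\partial h+\boldsymbol{G}$ possesses a zero; note that $\boldsymbol{T}$ is maximally monotone because $\boldsymbol{G}$ is monotone, Lipschitzian, and everywhere defined \cite{Livre1}.

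Under hypothesis~\ref{p:9b} this is quick: $\partial h$, being a subdifferential, is $3^*$ monotone, while $\boldsymbol{G}$ is $3^*$ monotone and surjective, so the Brezis--Haraux theorem \cite{Livre1} gives $\inte\ran\boldsymbol{T}=\inte(\ran\partial h+\ran\boldsymbol{G})$; as $\boldsymbol{G}$ is surjective this equals $\HHH$, whence $\boldsymbol{T}$ is surjective and $\boldsymbol{0}\in\ran\boldsymbol{T}$. Under hypothesis~\ref{p:9a} I would argue by coercivity. Each of \ref{p:9a1}--\ref{p:9a4} forces $\varphi_i$ to be supercoercive: \ref{p:9a2} by definition; \ref{p:9a3} since $\varphi_i\equiv\pinf$ off a bounded set; \ref{p:9a1} since surjectivity of $\partial\varphi_i$ gives $\dom\varphi_i^*=\HH_i$, which is equivalent to supercoercivity; and \ref{p:9a4} because uniform convexity makes $\partial\varphi_i$ uniformly monotone, and chaining its modulus $\phi$ along a ray produces at least quadratic growth. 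Supercoercivity passes to $\boldsymbol{\varphi}$, and since each $g_k$ is minorized by a continuous affine functional, $\boldsymbol{g}\circ\boldsymbol{L}$ is minorized by a continuous linear one, whence $h$ is supercoercive. Anchoring at a point of $\dom h$, which after translation we take to be $\boldsymbol{0}$, the subgradient inequality gives $\scal{\boldsymbol{x}}{\boldsymbol{u}^*}\geq h(\boldsymbol{x})-h(\boldsymbol{0})$ for $\boldsymbol{u}^*\in\partial h(\boldsymbol{x})$, while monotonicity of $\boldsymbol{G}$ gives $\scal{\boldsymbol{x}}{\boldsymbol{G}\boldsymbol{x}}\geq-\|\boldsymbol{x}\|\,\|\boldsymbol{G}\boldsymbol{0}\|$; adding these shows, for every $\boldsymbol{x}^*=\boldsymbol{u}^*+\boldsymbol{G}\boldsymbol{x}\in\boldsymbol{T}\boldsymbol{x}$, that $\scal{\boldsymbol{x}}{\boldsymbol{x}^*}/\|\boldsymbol{x}\|\to\pinf$ as $\|\boldsymbol{x}\|\to\pinf$. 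Thus $\boldsymbol{T}$ is coercive, hence surjective \cite{Livre1}, and again $\boldsymbol{0}\in\ran\boldsymbol{T}$.

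In either case $\zer\boldsymbol{T}\neq\emp$, so $\boldsymbol{Z}\neq\emp$ by the reduction, and Proposition~\ref{p:1} turns any $(\overline{\boldsymbol{x}},\overline{\boldsymbol{v}}^*)\in\boldsymbol{Z}$ into a solution of \eqref{e:nash}. The main obstacle is case~\ref{p:9a}: because $\boldsymbol{G}$ is only monotone and Lipschitzian---neither a gradient nor $3^*$ monotone---Brezis--Haraux is unavailable, and one must instead collapse the four unrelated hypotheses into the single quantitative consequence that $h$ is supercoercive, and then check that the skew-like term $\boldsymbol{G}$ cannot spoil the coercivity of $\partial h+\boldsymbol{G}$. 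Verifying that \ref{p:9a1} and \ref{p:9a4} each entail supercoercivity, and that supercoercivity is stable under the direct sum and under the addition of $\boldsymbol{g}\circ\boldsymbol{L}$, is where the genuine work lies.
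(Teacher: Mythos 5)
Your skeleton coincides with the paper's: the separable functions $\boldsymbol{\varphi}$ and $\boldsymbol{g}$, the operator $\boldsymbol{L}$, the use of $\boldsymbol{0}\in\sri\boldsymbol{C}$ to get $\partial(\boldsymbol{\varphi}+\boldsymbol{g}\circ\boldsymbol{L})=\partial\boldsymbol{\varphi}+\boldsymbol{L}^*\circ\partial\boldsymbol{g}\circ\boldsymbol{L}$, the reduction of \eqref{e:kt} to $\zer(\partial h+\boldsymbol{G})\neq\emp$, and your treatment of case~\ref{p:9b} via the Brezis--Haraux theorem are all correct and essentially identical to the paper's proof. The gap is in case~\ref{p:9a}: you collapse the four sub-conditions into supercoercivity, but the implication \ref{p:9a}\ref{p:9a1}~$\Rightarrow$~\ref{p:9a}\ref{p:9a2} on which this rests (``surjectivity of $\partial\varphi_i$ gives $\dom\varphi_i^*=\HH_i$, which is equivalent to supercoercivity'') is false when $\HH_i$ is infinite-dimensional. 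Surjectivity of $\partial\varphi_i$ does give $\dom\varphi_i^*=\HH_i$, but full domain of the conjugate is equivalent to supercoercivity only in finite dimensions; in general, $\varphi_i$ is supercoercive if and only if $\varphi_i^*$ is bounded on bounded sets \cite[Proposition~14.15]{Livre1}, which is strictly stronger. Concretely, on $\HH_i=\ell^2$ take $f\colon\ell^2\to\RR\colon x\mapsto\sum_{n\geq 1}n\,x_n^{2n}$, where $x_n$ is the $n$th coordinate of $x$. Since $x_n\to 0$, the series converges for every $x\in\ell^2$, and $f$ is convex and lower semicontinuous (a supremum of continuous partial sums), hence real-valued, continuous, and everywhere subdifferentiable \cite{Livre1}. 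Thus $\varphi_i=f^*\in\Gamma_0(\ell^2)$ has $\partial\varphi_i=(\partial f)^{-1}$ surjective \cite[Corollary~16.30]{Livre1}, so \ref{p:9a}\ref{p:9a1} holds; yet $\varphi_i^*=f$ satisfies $f(2e_n)=n4^n$ on the ball of radius $2$ ($e_n$ being the $n$th standard unit vector), so $\varphi_i$ is not supercoercive and \ref{p:9a}\ref{p:9a2} fails.

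This is not a repairable detail inside your framework: for such a $\varphi_i$ the operator $\partial h+\boldsymbol{G}$ need not be coercive, so no coercivity-based surjectivity criterion can reach condition~\ref{p:9a}\ref{p:9a1}. The paper runs the reduction in the \emph{opposite} direction---it proves that \ref{p:9a}\ref{p:9a2}, \ref{p:9a}\ref{p:9a3}, and \ref{p:9a}\ref{p:9a4} each imply \ref{p:9a}\ref{p:9a1}---and then handles the surjectivity hypothesis by duality rather than coercivity: with $\boldsymbol{A}=\partial\boldsymbol{\varphi}$ and $\boldsymbol{Q}=\boldsymbol{A}+\boldsymbol{L}^*\circ\partial\boldsymbol{g}\circ\boldsymbol{L}$, the Brezis--Haraux theorem gives $\ran\boldsymbol{Q}=\HHH$ (both summands are $3^*$ monotone and $\boldsymbol{A}$ is surjective); then the dual operator $\boldsymbol{P}=-\boldsymbol{Q}^{-1}\circ(-\Id)+\boldsymbol{G}^{-1}$ has a $3^*$ monotone, full-domain first summand while $\ran\boldsymbol{G}^{-1}=\dom\boldsymbol{G}=\HHH$, so a second application of Brezis--Haraux makes $\boldsymbol{P}$ surjective, and \cite[Proposition~26.33(iii)]{Livre1} converts $\zer\boldsymbol{P}\neq\emp$ into $\zer\boldsymbol{T}\neq\emp$. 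Your coercivity argument is a valid and more elementary alternative for \ref{p:9a}\ref{p:9a2}, \ref{p:9a}\ref{p:9a3}, and \ref{p:9a}\ref{p:9a4} (for the last one, cite \cite[Proposition~17.26(ii)]{Livre1} rather than the quadratic-growth sketch), and your case~\ref{p:9b} is fine; to complete the proof you must either adopt the paper's dual argument for \ref{p:9a}\ref{p:9a1} or restrict that sub-case to finite dimensions.
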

\begin{proof}
Define
\begin{equation}
\label{e:7439}
\begin{cases}
\boldsymbol{A}\colon\HHH\to 2^{\HHH}\colon
\boldsymbol{x}\mapsto\bigtimes_{i\in I}\partial\varphi_i(x_i)\\
\boldsymbol{B}\colon\GGG\to 2^{\GGG}\colon
\boldsymbol{y}\mapsto\bigtimes_{k\in K}\partial g_k(y_k)\\
\boldsymbol{L}\colon\HHH\to\GGG\colon\boldsymbol{x}\mapsto
(\boldsymbol{L}_k\boldsymbol{x})_{k\in K}
\end{cases}
\end{equation}
and
\begin{equation}
\label{e:d4y}
\boldsymbol{T}\colon\HHH\to 2^{\HHH}\colon\boldsymbol{x}\mapsto
\boldsymbol{A}\boldsymbol{x}+\boldsymbol{L}^*\big(
\boldsymbol{B}(\boldsymbol{L}\boldsymbol{x})\big)+
\boldsymbol{G}\boldsymbol{x}.
\end{equation}
Note that the adjoint of $\boldsymbol{L}$ is
\begin{equation}
\label{e:3851}
\boldsymbol{L}^*\colon\GGG\to\HHH\colon\boldsymbol{v}^*\mapsto
\sum_{k\in K}\boldsymbol{L}_k^*\overline{v}_k^*.
\end{equation}
Now suppose that $\overline{\boldsymbol{x}}\in\zer\boldsymbol{T}$.
Then there exists $\overline{\boldsymbol{v}}^*\in
\boldsymbol{B}(\boldsymbol{L}\overline{\boldsymbol{x}})$ such that
${-}\boldsymbol{G}\overline{\boldsymbol{x}}-
\boldsymbol{L}^*\overline{\boldsymbol{v}}^*
\in\boldsymbol{A}\overline{\boldsymbol{x}}$
or, equivalently, by Problem~\ref{prob:1}\ref{prob:1b} and
\eqref{e:3851}, $(\forall i\in I)$
${-}\pnabla{i}\boldsymbol{f}_{\!i}(\overline{\boldsymbol{x}})-
\sum_{k\in K}\Pi_i(\boldsymbol{L}_k^*\overline{v}_k^*)
\in\partial\varphi_i(\overline{x}_i)$.
Further, \eqref{e:7439} yields
$\overline{v}_k^*\in\partial g_k
(\boldsymbol{L}_k\overline{\boldsymbol{x}})$.
Altogether, in view of \eqref{e:kt} and Proposition~\ref{p:1}, 
we have established the implication
\begin{equation}
\zer\boldsymbol{T}\neq\emp
\quad\Rightarrow\quad\boldsymbol{Z}\neq\emp
\quad\Rightarrow\quad\text{Problem~\ref{prob:1} has a solution.}
\end{equation}
Therefore, it suffices to show that
$\zer\boldsymbol{T}\neq\emp$. To do so, define
\begin{equation}
\begin{cases}
\boldsymbol{\varphi}\colon\HHH\to\RX\colon\boldsymbol{x}\mapsto
\sum_{i\in I}\varphi_i(x_i)\\
\boldsymbol{g}\colon\GGG\to\RX\colon\boldsymbol{y}\mapsto
\sum_{k\in K}g_k(y_k)\\
\boldsymbol{Q}=\boldsymbol{A}
+\boldsymbol{L}^*\circ\boldsymbol{B}\circ\boldsymbol{L}.
\end{cases}
\end{equation}
Then, by \eqref{e:7439} and \cite[Proposition~16.9]{Livre1},
$\boldsymbol{A}=\partial\boldsymbol{\varphi}$ and
$\boldsymbol{B}=\partial\boldsymbol{g}$.
In turn, since \eqref{e:3038} and \eqref{e:7439} imply that
$\boldsymbol{0}\in\sri\boldsymbol{C}
=\sri\!(\boldsymbol{L}(\dom\boldsymbol{\varphi})
-\dom\boldsymbol{g})$, we derive from
\cite[Theorem~16.47(i)]{Livre1} that
$\boldsymbol{Q}=\partial(\boldsymbol{\varphi}+
\boldsymbol{g}\circ\boldsymbol{L})$.
Therefore, in view of \cite[Theorem~20.25 and
Example~25.13]{Livre1},
\begin{equation}
\label{e:1185}
\text{$\boldsymbol{A}$, $\boldsymbol{B}$, and
$\boldsymbol{Q}$ are maximally monotone and $3^*$ monotone}.
\end{equation}

\ref{p:9a}:
Fix temporarily $i\in I$. By \cite[Theorem~20.25]{Livre1},
$\partial\varphi_i$ is maximally monotone.
First, if \ref{p:9a}\ref{p:9a2} holds, then
\cite[Corollary~16.30, and Propositions~14.15 and 16.27]{Livre1}
entail that $\ran\partial\varphi_i
=\dom\partial\varphi_i^*=\HH_i$ and, hence,
\ref{p:9a}\ref{p:9a1} holds. Second,
if \ref{p:9a}\ref{p:9a3} holds, then
$\dom\partial\varphi_i\subset\dom\varphi_i$ is bounded
and, therefore, it follows from
\cite[Corollary~21.25]{Livre1} that \ref{p:9a}\ref{p:9a1} holds.
Finally, if \ref{p:9a}\ref{p:9a4} holds, then
\cite[Proposition~17.26(ii)]{Livre1} implies that
\ref{p:9a}\ref{p:9a2} holds and, in turn, that 
\ref{p:9a}\ref{p:9a1} holds.
Altogether, it is enough to show that
\begin{equation}
\big[\;(\forall i\in I)\;\;
\partial\varphi_i\;\:\text{is surjective}\;\big]
\quad\Rightarrow\quad\zer\boldsymbol{T}\neq\emp.
\end{equation}
Assume that the operators $(\partial\varphi_i)_{i\in I}$ are
surjective and set
\begin{equation}
\boldsymbol{P}={-}\boldsymbol{Q}^{-1}
\circ({-}\boldsymbol{\Id})+\boldsymbol{G}^{-1}.
\end{equation}
Then we derive from \eqref{e:7439} that
$\boldsymbol{A}$ is surjective.
On the other hand, \cite[Proposition~6]{Botr07} asserts that
$\boldsymbol{L}^*\circ\boldsymbol{B}\circ\boldsymbol{L}$
is $3^*$ monotone. Hence, \eqref{e:1185} and
\cite[Corollary~25.27(i)]{Livre1} yields
\begin{equation}
\dom\boldsymbol{Q}^{-1}=\ran\boldsymbol{Q}=\HHH.
\end{equation}
In turn, since $\boldsymbol{Q}^{-1}$ and $\boldsymbol{G}^{-1}$
are maximally monotone, \cite[Theorem~25.3]{Livre1} implies that
$\boldsymbol{P}$ is likewise. Furthermore, we observe that
\begin{equation}
\dom\boldsymbol{G}^{-1}\subset\HHH
=\dom\big({-}\boldsymbol{Q}^{-1}\circ({-}\boldsymbol{\Id})\big) 
\end{equation}
and, by virtue of \eqref{e:1185} and 
\cite[Proposition~25.19(i)]{Livre1}, that
$-\boldsymbol{Q}^{-1}\circ({-}\boldsymbol{\Id})$
is $3^*$ monotone. Therefore, since
$\ran\boldsymbol{G}^{-1}=\dom\boldsymbol{G}=\HHH$,
\cite[Corollary~25.27(ii)]{Livre1} entails that
$\boldsymbol{P}$ is surjective and, in turn, that
$\zer\boldsymbol{P}\neq\emp$.
Consequently, \cite[Proposition~26.33(iii)]{Livre1}
asserts that $\zer\boldsymbol{T}\neq\emp$.

\ref{p:9b}:
Since $\boldsymbol{G}$ is maximally monotone and
$\dom\boldsymbol{G}=\HHH$, it results from \eqref{e:1185} and
\cite[Theorem~25.3]{Livre1} that $\boldsymbol{T}=\boldsymbol{Q}
+\boldsymbol{G}$ is maximally monotone. Hence, since
$\boldsymbol{G}$ is surjective, we derive from \eqref{e:1185} and
\cite[Corollary~25.27(i)]{Livre1} that $\boldsymbol{T}$
is surjective and, therefore, that
$\zer\boldsymbol{T}\neq\emp$.
\end{proof}

\begin{remark}
Sufficient conditions for $\boldsymbol{0}\in\sri\boldsymbol{C}$
to hold in Proposition~\ref{p:9} can be found in
\cite[Proposition~5.3]{Siop13}.
\end{remark}

\section{Algorithm}
\label{sec:4}

The main result of this section is the following theorem, where we
introduce an asynchronous block-iterative algorithm to solve
Problem~\ref{prob:1} and prove its convergence. 

\begin{theorem}
\label{t:1}
Consider the setting of Problem~\ref{prob:1} and set
$(\forall i\in I)$
$\Pi_i\colon\HHH\to\HH_i\colon\boldsymbol{x}\mapsto x_i$. Let 
$(\chi_i)_{i\in I}$ be a family in $\RP$ such that
\begin{equation}
\label{e:Glip}
(\forall\boldsymbol{x}\in\HHH)(\forall\boldsymbol{y}\in\HHH)
\quad
\scal{\boldsymbol{x}-\boldsymbol{y}}{
\boldsymbol{G}\boldsymbol{x}-\boldsymbol{G}\boldsymbol{y}}
\leq\sum_{i\in I}\chi_i\|x_i-y_i\|^2,
\end{equation}
let $\alpha\in\RPP$ and $\varepsilon\in\zeroun$ be such that
$1/\varepsilon>\alpha+\max_{i\in I}\chi_i$,
let $(\lambda_n)_{n\in\NN}$ be in
$\left[\varepsilon,2-\varepsilon\right]$, and let $D\in\NN$.
Suppose that the following are satisfied:
\begin{enumerate}[label={\rm[\alph*]}]
\item
\label{t:1z}
There exists 
$(\overline{\boldsymbol{x}},\overline{\boldsymbol{v}}^*)
\in\HHH\oplus\GGG$ such that \eqref{e:kt} holds.
\item
\label{t:1a}
For every $i\in I$, $x_{i,0}\in\HH_i$ and, for every $n\in\NN$,
$\gamma_{i,n}\in\left[\varepsilon,1/(\chi_i+\alpha)\right]$ and
$c_i(n)\in\NN$ satisfies $n-D\leq c_i(n)\leq n$.
\item
\label{t:1b}
For every $k\in K$, $v_{k,0}^*\in\GG_k$ and, for every $n\in\NN$, 
$\mu_{k,n}\in\left[\alpha,1/\varepsilon\right]$ and 
$d_k(n)\in\NN$ satisfies $n-D\leq d_k(n)\leq n$.
\item
\label{t:1c}
$(I_n)_{n\in\NN}$ are nonempty subsets of $I$
and $(K_n)_{n\in\NN}$ are nonempty subsets of $K$ such that
\begin{equation}
\label{e:1039}
I_0=I,\quad K_0=K,\quad\text{and}\quad
(\exi m\in\NN)(\forall n\in\NN)\;\;
\bigcup_{j=n}^{n+m}I_j=I\;\:\text{and}\;\:
\bigcup_{j=n}^{n+m}K_j=K.
\end{equation}
\end{enumerate}
Further, set $\boldsymbol{L}\colon\HHH\to\GGG\colon
\boldsymbol{x}\mapsto(\boldsymbol{L}_k\boldsymbol{x})_{k\in K}$.
Iterate
\begin{equation}
\label{e:1494}
\begin{array}{l}
\text{for}\;n=0,1,\ldots\\
\left\lfloor
\begin{array}{l}
\text{for every}\;i\in I_n\\
\left\lfloor
\begin{array}{l}
x_{i,n}^*=x_{i,c_i(n)}-\gamma_{i,c_i(n)}\Big(
\pnabla{i}\boldsymbol{f}_{\!i}(\boldsymbol{x}_{c_i(n)})+
\sum_{k\in K}\Pi_i\big(\boldsymbol{L}_k^*v_{k,c_i(n)}^*\big)
\Big)\\
a_{i,n}=\prox_{\gamma_{i,c_i(n)}\varphi_i}x_{i,n}^*\\
a_{i,n}^*=\gamma_{i,c_i(n)}^{-1}(x_{i,n}^*-a_{i,n})
\end{array}
\right.
\\
\text{for every}\;i\in I\smallsetminus I_n\\
\left\lfloor
\begin{array}{l}
(a_{i,n},a_{i,n}^*)=(a_{i,n-1},a_{i,n-1}^*)
\end{array}
\right.\\
\text{for every}\;k\in K_n\\
\left\lfloor
\begin{array}{l}
y_{k,n}^*=\mu_{k,d_k(n)}v_{k,d_k(n)}^*+
\boldsymbol{L}_k\boldsymbol{x}_{d_k(n)}\\
b_{k,n}=\prox_{\mu_{k,d_k(n)}g_k}y_{k,n}^*\\
b_{k,n}^*=\mu_{k,d_k(n)}^{-1}(y_{k,n}^*-b_{k,n})
\end{array}
\right.\\
\text{for every}\;k\in K\smallsetminus K_n\\
\left\lfloor
\begin{array}{l}
(b_{k,n},b_{k,n}^*)=(b_{k,n-1},b_{k,n-1}^*)
\end{array}
\right.\\
\boldsymbol{t}_n^*=\boldsymbol{a}_n^*+
\boldsymbol{G}\boldsymbol{a}_n+
\boldsymbol{L}^*\boldsymbol{b}_n^*\\
\boldsymbol{t}_n=\boldsymbol{b}_n-\boldsymbol{L}\boldsymbol{a}_n
\\
\pi_n=
\scal{\boldsymbol{a}_n-\boldsymbol{x}_n}{\boldsymbol{t}_n^*}+
\scal{\boldsymbol{t}_n}{\boldsymbol{b}_n^*-\boldsymbol{v}_n^*}\\
\text{if}\;\pi_n<0\\
\left\lfloor
\begin{array}{l}
\alpha_n=\lambda_n\pi_n/\big(\|\boldsymbol{t}_n\|^2+
\|\boldsymbol{t}_n^*\|^2\big)\\
\boldsymbol{x}_{n+1}=\boldsymbol{x}_n+
\alpha_n\boldsymbol{t}_n^*\\
\boldsymbol{v}_{n+1}^*=\boldsymbol{v}_n^*+\alpha_n
\boldsymbol{t}_n
\end{array}
\right.\\
\text{else}\\
\left\lfloor
\begin{array}{l}
\big(\boldsymbol{x}_{n+1},\boldsymbol{v}_{n+1}^*\big)=
(\boldsymbol{x}_n,\boldsymbol{v}_n^*).
\end{array}
\right.\\[2mm]
\end{array}
\right.
\end{array}
\end{equation}
Then $(\boldsymbol{x}_n)_{n\in\NN}$ converges weakly to a
solution to Problem~\ref{prob:1}. 
\end{theorem}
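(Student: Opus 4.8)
The plan is to read the iteration \eqref{e:1494} as a relaxed outer-approximation (Fej\'er) method in the primal--dual space $\HHH\oplus\GGG$, whose target is the Kuhn--Tucker set $\boldsymbol{Z}$ of solutions to \eqref{e:kt}; this set is closed and convex and, by hypothesis~\ref{t:1z}, nonempty. Writing $\boldsymbol{p}_n=(\boldsymbol{x}_n,\boldsymbol{v}_n^*)$, I would first record that the proximal steps produce genuine graph points: the identities $a_{i,n}=\prox_{\gamma_{i,c_i(n)}\varphi_i}x_{i,n}^*$ and $a_{i,n}^*=\gamma_{i,c_i(n)}^{-1}(x_{i,n}^*-a_{i,n})$ give $a_{i,n}^*\in\partial\varphi_i(a_{i,n})$, and likewise $b_{k,n}^*\in\partial g_k(b_{k,n})$. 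Hence, with $\boldsymbol{A},\boldsymbol{B},\boldsymbol{L},\boldsymbol{G}$ as in \eqref{e:7439}--\eqref{e:d4y}, one has $\boldsymbol{a}_n^*\in\boldsymbol{A}\boldsymbol{a}_n$, $\boldsymbol{b}_n^*\in\boldsymbol{B}\boldsymbol{b}_n$, and $\boldsymbol{G}\boldsymbol{a}_n$ is evaluated exactly, so $\boldsymbol{a}_n^*+\boldsymbol{G}\boldsymbol{a}_n\in(\boldsymbol{A}+\boldsymbol{G})\boldsymbol{a}_n$. For the frozen indices $i\in I\smallsetminus I_n$ and $k\in K\smallsetminus K_n$ the previous graph points are retained, which leaves these memberships intact.

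The heart of the argument is to verify that the affine functional
\[
\Psi_n\colon(\boldsymbol{x},\boldsymbol{v}^*)\mapsto
\scal{\boldsymbol{a}_n-\boldsymbol{x}}{\boldsymbol{t}_n^*}
+\scal{\boldsymbol{t}_n}{\boldsymbol{b}_n^*-\boldsymbol{v}^*},
\]
with $\boldsymbol{t}_n^*=\boldsymbol{a}_n^*+\boldsymbol{G}\boldsymbol{a}_n+\boldsymbol{L}^*\boldsymbol{b}_n^*$ and $\boldsymbol{t}_n=\boldsymbol{b}_n-\boldsymbol{L}\boldsymbol{a}_n$, is nonnegative on $\boldsymbol{Z}$. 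Fixing $(\overline{\boldsymbol{x}},\overline{\boldsymbol{v}}^*)\in\boldsymbol{Z}$, expanding $\Psi_n(\overline{\boldsymbol{x}},\overline{\boldsymbol{v}}^*)$, and using $\scal{\boldsymbol{a}_n-\overline{\boldsymbol{x}}}{\boldsymbol{L}^*\boldsymbol{b}_n^*}=\scal{\boldsymbol{L}(\boldsymbol{a}_n-\overline{\boldsymbol{x}})}{\boldsymbol{b}_n^*}$, the bilinear $\boldsymbol{L}$-terms recombine into the $\boldsymbol{B}$-monotonicity pairing together with $\scal{\boldsymbol{a}_n-\overline{\boldsymbol{x}}}{\boldsymbol{L}^*\overline{\boldsymbol{v}}^*}$; after substituting $\boldsymbol{u}^*={-}\boldsymbol{G}\overline{\boldsymbol{x}}-\boldsymbol{L}^*\overline{\boldsymbol{v}}^*\in\boldsymbol{A}\overline{\boldsymbol{x}}$, the whole expression reduces to a sum of three nonnegative monotonicity pairings, namely $\scal{\boldsymbol{a}_n-\overline{\boldsymbol{x}}}{\boldsymbol{a}_n^*-\boldsymbol{u}^*}\geq0$ (for $\boldsymbol{A}$), $\scal{\boldsymbol{a}_n-\overline{\boldsymbol{x}}}{\boldsymbol{G}\boldsymbol{a}_n-\boldsymbol{G}\overline{\boldsymbol{x}}}\geq0$ (for $\boldsymbol{G}$), and $\scal{\boldsymbol{b}_n-\boldsymbol{L}\overline{\boldsymbol{x}}}{\boldsymbol{b}_n^*-\overline{\boldsymbol{v}}^*}\geq0$ (for $\boldsymbol{B}$). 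Thus $\boldsymbol{Z}\subset\boldsymbol{H}_n=\menge{\boldsymbol{z}\in\HHH\oplus\GGG}{\Psi_n(\boldsymbol{z})\geq0}$. Since $\pi_n=\Psi_n(\boldsymbol{p}_n)$ and the gradient of $\Psi_n$ is $({-}\boldsymbol{t}_n^*,{-}\boldsymbol{t}_n)$ with squared norm $\|\boldsymbol{t}_n^*\|^2+\|\boldsymbol{t}_n\|^2$, the branch $\pi_n<0$ is exactly the $\lambda_n$-relaxed projection of $\boldsymbol{p}_n$ onto $\boldsymbol{H}_n$, while the branch $\pi_n\geq0$ leaves $\boldsymbol{p}_n\in\boldsymbol{H}_n$ unchanged.

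From here I would run the standard Fej\'er machinery. Because each $\boldsymbol{H}_n\supset\boldsymbol{Z}$ and $\lambda_n\in\left[\varepsilon,2-\varepsilon\right]$, the sequence $(\boldsymbol{p}_n)$ is Fej\'er monotone with respect to $\boldsymbol{Z}$, hence bounded, with $\|\boldsymbol{p}_{n+1}-\boldsymbol{p}_n\|\to0$ and $\sum_n\pi_n^2/(\|\boldsymbol{t}_n\|^2+\|\boldsymbol{t}_n^*\|^2)<\pinf$. Invoking the weak-convergence principle for Fej\'er-monotone sequences \cite[Theorem~5.5]{Livre1}, it then suffices to prove that every weak sequential cluster point of $(\boldsymbol{p}_n)$ lies in $\boldsymbol{Z}$; a weak limit $\overline{\boldsymbol{x}}$ of $(\boldsymbol{x}_n)$ will then furnish, through Proposition~\ref{p:1}, a solution to \eqref{e:nash}.

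The real obstacle is this last cluster-point step, because the hyperplanes are built from \emph{stale} and only \emph{partially refreshed} data: the resolvents are evaluated at the delayed iterates $\boldsymbol{x}_{c_i(n)},\boldsymbol{v}^*_{d_k(n)}$, and only the blocks $I_n,K_n$ are updated at step $n$. The delay bound $n-D\leq c_i(n),d_k(n)\leq n$ together with $\|\boldsymbol{p}_{n+1}-\boldsymbol{p}_n\|\to0$ shows that the stale iterates are asymptotically indistinguishable from $\boldsymbol{p}_n$, while the coverage condition \eqref{e:1039} guarantees that over every window of length $m$ each component operator is activated, so no coordinate of $\boldsymbol{t}_n,\boldsymbol{t}_n^*$ is permanently neglected. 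Combining these facts with the forward-step bound $\gamma_{i,n}\leq1/(\chi_i+\alpha)$ and the Lipschitz-type estimate \eqref{e:Glip}---which is precisely what tames the forwardly evaluated, non-resolved operator $\boldsymbol{G}$---one shows that along a subsequence realizing a weak cluster point the constructed graph points $(\boldsymbol{a}_n,\boldsymbol{a}_n^*)$ and $(\boldsymbol{b}_n,\boldsymbol{b}_n^*)$ converge to data certifying that the limit satisfies \eqref{e:kt}. This demixing of asynchrony, block activation, and the forward $\boldsymbol{G}$-step is where essentially all the effort lies; the Fej\'er and projection parts are routine.
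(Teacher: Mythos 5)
Your halfspace construction is sound as far as it goes: the memberships $a_{i,n}^*\in\partial\varphi_i(a_{i,n})$ and $b_{k,n}^*\in\partial g_k(b_{k,n})$ are genuine, your three-pairing expansion showing that $\Psi_n\geq 0$ on $\boldsymbol{Z}$ is correct, and the update branch $\pi_n<0$ is indeed the $\lambda_n$-relaxed projection of $\boldsymbol{p}_n=(\boldsymbol{x}_n,\boldsymbol{v}_n^*)$ onto $\boldsymbol{H}_n$; this is the projective-splitting reading of \eqref{e:1494}, in the spirit of \cite{MaPr18}. The genuine gap is that everything after this is asserted rather than proved, and it is not the routine part. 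Fej\'er monotonicity with respect to $\boldsymbol{Z}$, boundedness, summability of the step lengths, and summability of $\pi_n^2/(\|\boldsymbol{t}_n\|^2+\|\boldsymbol{t}_n^*\|^2)$ are vacuous by themselves: a sequence that takes the ``else'' branch $\pi_n\geq 0$ at every iteration satisfies all of these while standing still at an arbitrary point, and mere monotonicity of $\boldsymbol{A}$, $\boldsymbol{B}$, $\boldsymbol{G}$ --- which is all your argument uses --- cannot exclude this. What excludes it is a depth-of-cut estimate of the form $-\pi_n\geq\alpha\|\widetilde{\boldsymbol{z}}_n-\boldsymbol{q}_n\|^2-\beta\,\|\widetilde{\boldsymbol{z}}_n-\boldsymbol{p}_n\|\,\|\widetilde{\boldsymbol{z}}_n-\boldsymbol{q}_n\|$, where $\boldsymbol{q}_n=(\boldsymbol{a}_n,\boldsymbol{b}_n^*)$ and $\widetilde{\boldsymbol{z}}_n$ is an auxiliary point consolidating the stale data; this hinges on the $\alpha$-strong monotonicity created jointly by the stepsize caps $\gamma_{i,n}\leq 1/(\chi_i+\alpha)$, $\mu_{k,n}\geq\alpha$ and \eqref{e:Glip}, hypotheses you invoke as ``taming'' $\boldsymbol{G}$ but from which you derive no inequality. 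Two further points are glossed over: since the blocks carry different delays $c_i(n)$, $d_k(n)$ and the operators $\boldsymbol{G}$, $\boldsymbol{L}$ couple all coordinates, there is no single past iterate at which the forward evaluations are taken, so the point $\widetilde{\boldsymbol{z}}_n$ must be \emph{manufactured} (the paper obtains it as the solution of a strongly monotone equation, \eqref{e:2138}); and the final identification step needs more than weak cluster points of graph elements, because graphs of maximally monotone operators are not sequentially closed in the weak$\times$weak topology --- one must first prove strong convergence of the residuals $(\boldsymbol{t}_n^*,\boldsymbol{t}_n)$ before a weak-strong closedness argument applies. Your closing sentence, ``one shows that \ldots the constructed graph points converge to data certifying that the limit satisfies \eqref{e:kt},'' is a restatement of the theorem, not a proof of it.

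For comparison, the paper never attacks this cluster-point step head-on either; it reduces it to citable results. Proposition~\ref{p:10} rewrites the generic asynchronous block iteration \eqref{e:alg} as a warped proximal iteration: the kernels $\boldsymbol{K}_n\colon\boldsymbol{x}\mapsto(\gamma_{i,\delta_i(n)}^{-1}x_i)_{i\in\BI}-\boldsymbol{Q}\boldsymbol{x}$ are shown to be $\alpha$-strongly monotone and Lipschitzian, the proximal outputs are identified as $(\boldsymbol{K}_n+\lag)^{-1}(\boldsymbol{K}_n\widetilde{\boldsymbol{x}}_n)$, the asynchronous lags are controlled via \cite[Lemma~A.3]{Sadd20} so that $\widetilde{\boldsymbol{x}}_n-\boldsymbol{x}_n\to\boldsymbol{0}$, and the hard analysis --- the depth-of-cut bound, the strong convergence of residuals, and the identification of weak limits --- is imported from \cite[Theorem~4.2]{Jmaa20}. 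Theorem~\ref{t:1} is then a purely formal specialization: take $\BI=I\cup K$, $P_i=\partial\varphi_i$ for $i\in I$, $P_k=\partial g_k^*$ for $k\in K$ (the $g_k$-steps are dualized through the Moreau decomposition, with $\gamma_{k,n}=\mu_{k,n}^{-1}$ and $\chi_k=0$), and the monotone Lipschitzian operator $\boldsymbol{Q}\colon(\boldsymbol{x},\boldsymbol{v}^*)\mapsto(\boldsymbol{G}\boldsymbol{x}+\boldsymbol{L}^*\boldsymbol{v}^*,{-}\boldsymbol{L}\boldsymbol{x})$, then apply Proposition~\ref{p:10}\ref{p:10ii} followed by Proposition~\ref{p:1}. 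To complete your proof you would have to either reconstruct the substance of \cite[Theorem~4.2]{Jmaa20} inside your Fej\'er framework --- including the construction of $\widetilde{\boldsymbol{z}}_n$ and the treatment of \eqref{e:1039} --- or perform the paper's reduction and cite it, at which point your argument coincides with the paper's.
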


The salient features of the proposed algorithm are the following:
\begin{itemize}
\item
{\bfseries Decomposition:}
In \eqref{e:1494}, the functions $(\varphi_i)_{i\in I}$ and
$(g_k)_{k\in K}$ are activated separately via their proximity
operators. 
\item
{\bfseries Block-iterative implementation:}
At iteration $n$, we require that only the subfamilies of functions
$(\varphi_i)_{i\in I_n}$ and $(g_k)_{k\in K_n}$ be activated, as
opposed to all of them as in standard splitting methods. To
guarantee convergence, we ask in condition~\ref{t:1c} of
Theorem~\ref{t:1} that each of these functions be activated
frequently enough.
\item
{\bfseries Asynchronous implementation:}
Given $i\in I$ and $k\in K$, the asynchronous character of the
algorithm is materialized by the variables $c_i(n)$ and
$d_k(n)$ which signal when the underlying computations
incorporated at iteration $n$ were initiated. 
Conditions~\ref{t:1a} and \ref{t:1b} of Theorem~\ref{t:1} ask that
the lag between the initiation and the incorporation of such
computations do not exceed $D$ iterations. The synchronous
implementation is obtained when $c_i(n)=n$ and $d_k(n)=n$ in
\eqref{e:1494}. The introduction of asynchronous and
block-iterative techniques in monotone operator splitting were
initiated in \cite{MaPr18}.
\end{itemize}

In order to prove Theorem~\ref{t:1}, we need to establish some
preliminary properties.

\begin{proposition}
\label{p:10}
Let $(\XX_i)_{i\in\BI}$ be a finite family of real Hilbert spaces
with Hilbert direct sum $\XXX=\bigoplus_{i\in\BI}\XX_i$.
For every $i\in\BI$,
let $P_i\colon\XX_i\to 2^{\XX_i}$ be maximally monotone and let
$Q_i\colon\XXX\to\XX_i$.
It is assumed that $\boldsymbol{Q}\colon\XXX\to\XXX\colon
\boldsymbol{x}\mapsto(Q_i\boldsymbol{x})_{i\in\BI}$ is monotone
and Lipschitzian, and that the problem
\begin{equation}
\label{e:9}
\text{find}\;\:\boldsymbol{x}\in\XXX\;\:
\text{such that}\;\:(\forall i\in\BI)\;\;
0\in P_i{x}_i+Q_i\boldsymbol{x}
\end{equation}
has a solution. Let $(\chi_i)_{i\in\BI}$ be a family in
$\RP$ such that
\begin{equation}
\label{e:3021}
(\forall\boldsymbol{x}\in\XXX)
(\forall\boldsymbol{y}\in\XXX)\quad
\scal{\boldsymbol{x}-\boldsymbol{y}}{
\boldsymbol{Q}\boldsymbol{x}-\boldsymbol{Q}\boldsymbol{y}}
\leq\sum_{i\in\BI}\chi_i\|x_i-y_i\|^2,
\end{equation}
let $\alpha\in\RPP$, let $\varepsilon\in\zeroun$ be such
that $1/\varepsilon>\alpha+\max_{i\in\BI}\chi_i$,
and let $D\in\NN$. For every $i\in\BI$, let $x_{i,0}\in\XX_i$ and,
for every $n\in\NN$, let
$\gamma_{i,n}\in\left[\varepsilon,1/(\chi_i+\alpha)\right]$,
let $\lambda_n\in\left[\varepsilon,2-\varepsilon\right]$,
and let $d_i(n)\in\NN$ be such that
\begin{equation}
\label{e:4de2}
n-D\leq d_i(n)\leq n.
\end{equation}
In addition, let $(\BI_n)_{n\in\NN}$ be nonempty subsets of
$\BI$ such that
\begin{equation}
\label{e:1928}
\BI_0=\BI\quad\text{and}\quad
(\exi m\in\NN)(\forall n\in\NN)\;\;\bigcup_{j=n}^{n+m}\BI_j=\BI.
\end{equation}
Iterate
\begin{equation}
\label{e:alg}
\begin{array}{l}
\text{for}\;n=0,1,\ldots\\
\left\lfloor
\begin{array}{l}
\text{for every}\;i\in\BI_n\\
\left\lfloor
\begin{array}{l}
x_{i,n}^*=x_{i,d_i(n)}-
\gamma_{i,d_i(n)}Q_i\boldsymbol{x}_{d_i(n)}\\
p_{i,n}=J_{\gamma_{i,d_i(n)}P_i}x_{i,n}^*\\
p_{i,n}^*=\gamma_{i,d_i(n)}^{-1}(x_{i,n}^*-p_{i,n})
\end{array}
\right.
\\
\text{for every}\;i\in\BI\smallsetminus\BI_n\\
\left\lfloor
\begin{array}{l}
(p_{i,n},p_{i,n}^*)=(p_{i,n-1},p_{i,n-1}^*)
\end{array}
\right.
\\
\boldsymbol{s}_n^*=\boldsymbol{p}_n^*+
\boldsymbol{Q}\boldsymbol{p}_n
\\
\pi_n=\scal{\boldsymbol{p}_n-\boldsymbol{x}_n}{
\boldsymbol{s}_n^*}\\
\text{if}\;\pi_n<0\\
\left\lfloor
\begin{array}{l}
\alpha_n=\lambda_n\pi_n/\|\boldsymbol{s}_n^*\|^2\\
\boldsymbol{x}_{n+1}=\boldsymbol{x}_n+
\alpha_n\boldsymbol{s}_n^*
\end{array}
\right.
\\
\text{else}\\
\left\lfloor
\begin{array}{l}
\boldsymbol{x}_{n+1}=\boldsymbol{x}_n.
\end{array}
\right.\\[2mm]
\end{array}
\right.
\end{array}
\end{equation}
Then the following holds:
\begin{enumerate}
\item
\label{p:10i}
$(\forall i\in\BI)$ $x_{i,n}-p_{i,n}\to 0$.
\item
\label{p:10ii}
$(\boldsymbol{x}_n)_{n\in\NN}$ converges weakly to a solution
to \eqref{e:9}.
\end{enumerate}
\end{proposition}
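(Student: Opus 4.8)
The plan is to read \eqref{e:alg} as an outer-approximation (projective) splitting method for the operator $\boldsymbol{M}=\boldsymbol{P}+\boldsymbol{Q}$, where $\boldsymbol{P}\colon\XXX\to 2^{\XXX}\colon\boldsymbol{x}\mapsto\bigtimes_{i\in\BI}P_ix_i$. Since each $P_i$ is maximally monotone and $\boldsymbol{Q}$ is monotone, Lipschitzian, and defined on all of $\XXX$, the operators $\boldsymbol{P}$, $\boldsymbol{Q}$, and $\boldsymbol{M}$ are maximally monotone \cite{Livre1}, and \eqref{e:9} amounts to finding a point in $\boldsymbol{Z}=\zer\boldsymbol{M}$, which is nonempty by assumption. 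First I would record that the resolvent step gives $p_{i,n}^*\in P_ip_{i,n}$, whence $\boldsymbol{s}_n^*=\boldsymbol{p}_n^*+\boldsymbol{Q}\boldsymbol{p}_n\in\boldsymbol{M}\boldsymbol{p}_n$. Adding the monotonicity inequalities for $\boldsymbol{P}$ (at $(\boldsymbol{p}_n,\boldsymbol{p}_n^*)$ and $(\boldsymbol{z},-\boldsymbol{Q}\boldsymbol{z})$) and for $\boldsymbol{Q}$ then yields, for every $\boldsymbol{z}\in\boldsymbol{Z}$,
\begin{equation}
\scal{\boldsymbol{p}_n-\boldsymbol{z}}{\boldsymbol{s}_n^*}\geq 0,
\end{equation}
so the half-space $H_n=\menge{\boldsymbol{x}\in\XXX}{\scal{\boldsymbol{x}-\boldsymbol{p}_n}{\boldsymbol{s}_n^*}\leq 0}$ contains $\boldsymbol{Z}$.

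Next I would identify the update in the branch $\pi_n<0$ as a relaxed projection of $\boldsymbol{x}_n$ onto $H_n$; since $\boldsymbol{Z}\subset H_n$, this makes $(\boldsymbol{x}_n)_{n\in\NN}$ Fej\'er monotone with respect to $\boldsymbol{Z}$. Indeed, expanding $\|\boldsymbol{x}_{n+1}-\boldsymbol{z}\|^2$ with $\alpha_n=\lambda_n\pi_n/\|\boldsymbol{s}_n^*\|^2$ and using $\scal{\boldsymbol{x}_n-\boldsymbol{z}}{\boldsymbol{s}_n^*}\geq-\pi_n$ gives
\begin{equation}
\|\boldsymbol{x}_{n+1}-\boldsymbol{z}\|^2\leq\|\boldsymbol{x}_n-\boldsymbol{z}\|^2-\lambda_n(2-\lambda_n)\frac{\pi_n^2}{\|\boldsymbol{s}_n^*\|^2}.
\end{equation}
From this I obtain that $(\boldsymbol{x}_n)_{n\in\NN}$ is bounded and that $\sum_{n\in\NN}\pi_n^2/\|\boldsymbol{s}_n^*\|^2<\pinf$. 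Because the resolvents are nonexpansive and $\boldsymbol{Q}$ is Lipschitzian, boundedness of $(\boldsymbol{x}_n)_{n\in\NN}$ propagates to $(\boldsymbol{p}_n)_{n\in\NN}$, $(\boldsymbol{p}_n^*)_{n\in\NN}$, and hence $(\boldsymbol{s}_n^*)_{n\in\NN}$; the summability above therefore forces $\pi_n\to 0$ and $\|\boldsymbol{x}_{n+1}-\boldsymbol{x}_n\|=\lambda_n|\pi_n|/\|\boldsymbol{s}_n^*\|\to 0$.

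The technical core is a coercivity estimate for $\pi_n$. Substituting the forward step $x_{i,n}^*=x_{i,d_i(n)}-\gamma_{i,d_i(n)}Q_i\boldsymbol{x}_{d_i(n)}$ and the identity $p_{i,n}^*=\gamma_{i,d_i(n)}^{-1}(x_{i,n}^*-p_{i,n})$ into $\pi_n=\sum_{i\in\BI}\scal{p_{i,n}-x_{i,n}}{p_{i,n}^*+Q_i\boldsymbol{p}_n}$, and invoking \eqref{e:3021} together with $\gamma_{i,d_i(n)}^{-1}\geq\chi_i+\alpha$, one gets in the synchronous full-activation case the clean bound $\pi_n\leq-\alpha\|\boldsymbol{p}_n-\boldsymbol{x}_n\|^2$, so $\pi_n\to 0$ yields assertion~\ref{p:10i}. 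The main obstacle is to preserve this bound in the asynchronous block-iterative regime, where $\boldsymbol{p}_n$ is built from the delayed iterate $\boldsymbol{x}_{d_i(n)}$ and, for $i\in\BI\smallsetminus\BI_n$, from values frozen at an earlier activation. Here I would use the bounded-delay condition \eqref{e:4de2} and the bounded-gap condition \eqref{e:1928} to cap the number of iterations separating the initiation and the use of each computation, and combine them with $\|\boldsymbol{x}_{n+1}-\boldsymbol{x}_n\|\to 0$ to show that $\|\boldsymbol{x}_{d_i(n)}-\boldsymbol{x}_n\|\to 0$ and that the frozen blocks asymptotically agree with the current ones. The resulting discrepancies are then $o(1)$, the coercive bound survives as $\pi_n\leq-\alpha\|\boldsymbol{p}_n-\boldsymbol{x}_n\|^2+o(1)$, and \ref{p:10i} follows. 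This delay-and-activation bookkeeping is the hardest part and parallels the techniques of \cite{MaPr18}.

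Finally, for \ref{p:10ii} I would feed \ref{p:10i} back into the expression for $\boldsymbol{s}_n^*$: since $\gamma_{i,d_i(n)}^{-1}\leq 1/\varepsilon$ is bounded and $\boldsymbol{Q}$ is Lipschitzian, both contributions to $s_{i,n}^*$ vanish once $x_{i,n}-p_{i,n}\to 0$ and $\|\boldsymbol{x}_{d_i(n)}-\boldsymbol{x}_n\|\to 0$, so $\boldsymbol{s}_n^*\to 0$ strongly. Let $\boldsymbol{x}$ be any weak sequential cluster point of $(\boldsymbol{x}_n)_{n\in\NN}$, say $\boldsymbol{x}_{k_n}\weakly\boldsymbol{x}$; by \ref{p:10i} also $\boldsymbol{p}_{k_n}\weakly\boldsymbol{x}$, and since $(\boldsymbol{p}_{k_n},\boldsymbol{s}_{k_n}^*)\in\gra\boldsymbol{M}$ with $\boldsymbol{s}_{k_n}^*\to 0$, the weak--strong closedness of the graph of the maximally monotone operator $\boldsymbol{M}$ gives $0\in\boldsymbol{M}\boldsymbol{x}$, that is, $\boldsymbol{x}\in\boldsymbol{Z}$. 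Fej\'er monotonicity with respect to $\boldsymbol{Z}$, together with the fact that every weak cluster point lies in $\boldsymbol{Z}$, then implies that $(\boldsymbol{x}_n)_{n\in\NN}$ converges weakly to a point of $\boldsymbol{Z}$, which solves \eqref{e:9}.
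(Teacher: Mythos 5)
Your proposal is correct, but it takes a genuinely different route from the paper's. You prove the result from scratch by a projection/Fej\'er argument: $\boldsymbol{s}_n^*\in(\boldsymbol{P}+\boldsymbol{Q})\boldsymbol{p}_n$, where $\boldsymbol{P}\colon\boldsymbol{x}\mapsto\bigtimes_{i\in\BI}P_ix_i$, so your half-space $H_n$ contains the solution set $\boldsymbol{Z}=\zer(\boldsymbol{P}+\boldsymbol{Q})$; the update is a $\lambda_n$-relaxed projection onto $H_n$, which gives the Fej\'er inequality, boundedness, summability of $\pi_n^2/\|\boldsymbol{s}_n^*\|^2$, and $\boldsymbol{x}_{n+1}-\boldsymbol{x}_n\to\boldsymbol{0}$; the bounded-delay condition \eqref{e:4de2} and bounded-gap condition \eqref{e:1928} then make the asynchronous discrepancies vanish, so the coercivity bound $\pi_n\leq-\alpha\|\boldsymbol{p}_n-\boldsymbol{x}_n\|^2+o(1)$ survives (I checked: it is exact in the synchronous case, and the asynchronous error terms are products of bounded factors with $\|\boldsymbol{x}_{\delta_i(n)}-\boldsymbol{x}_n\|\to 0$, where $\overline{\delta}_i(n)$ is the last activation time of block $i$ and $\delta_i(n)=d_i(\overline{\delta}_i(n))$), yielding \ref{p:10i}; finally weak--strong closedness of the graph of the maximally monotone sum plus Fej\'er monotonicity yields \ref{p:10ii}. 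The paper instead reduces \eqref{e:alg} to the warped proximal iterations of \cite{Jmaa20}: it introduces the $\alpha$-strongly monotone Lipschitzian kernels $\boldsymbol{K}_n\colon\boldsymbol{x}\mapsto(\gamma_{i,\delta_i(n)}^{-1}x_i)_{i\in\BI}-\boldsymbol{Q}\boldsymbol{x}$, constructs auxiliary points $\widetilde{\boldsymbol{x}}_n$ with $\boldsymbol{K}_n\widetilde{\boldsymbol{x}}_n=(\gamma_{i,\delta_i(n)}^{-1}x_{i,\overline{\delta}_i(n)}^*)_{i\in\BI}$, rewrites the iteration as $\boldsymbol{p}_n=(\boldsymbol{K}_n+\boldsymbol{P}+\boldsymbol{Q})^{-1}(\boldsymbol{K}_n\widetilde{\boldsymbol{x}}_n)$ and $\boldsymbol{s}_n^*=\boldsymbol{K}_n\widetilde{\boldsymbol{x}}_n-\boldsymbol{K}_n\boldsymbol{p}_n$, and then invokes \cite[Theorem~4.2]{Jmaa20}, delegating the delay bookkeeping to \cite[Lemma~A.3]{Sadd20}. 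The paper's route is shorter, since the Fej\'er mechanics and cluster-point analysis are inherited from \cite{Jmaa20}, and it immediately yields a strongly convergent variant via \cite[Theorem~4.8]{Jmaa20}; your route is self-contained, avoids constructing $\widetilde{\boldsymbol{x}}_n$ (no appeal to surjectivity of strongly monotone operators), and exposes the role of each hypothesis. If you write it up, make three small points explicit: the frozen pairs still satisfy $p_{i,n}^*\in P_ip_{i,n}$ because $\BI_0=\BI$ initializes every block and these pairs are inherited from the last activation; boundedness of $(\boldsymbol{p}_n)_{n\in\NN}$ requires anchoring the parameter-dependent resolvents at a solution $\boldsymbol{z}$ via the identity $z_i=J_{\gamma P_i}(z_i-\gamma Q_i\boldsymbol{z})$ for all $\gamma\in\RPP$; and at the stage where you claim ``$\pi_n\to 0$'' the Fej\'er argument only gives $\max(-\pi_n,0)\to 0$, which is, however, exactly what the coercivity bound needs.
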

\begin{proof}
Define
\begin{equation}
\label{e:s}
\lag\colon\XXX\to 2^{\XXX}\colon\boldsymbol{x}\mapsto
\boldsymbol{Q}\boldsymbol{x}+
\bigtimes_{i\in\BI}P_ix_i.
\end{equation}
It follows from \cite[Proposition~20.23]{Livre1} that the operator
$\boldsymbol{x}\mapsto\bigtimes_{i\in\BI}P_ix_i$
is maximally monotone. Thus, since
$\boldsymbol{Q}$ is maximally monotone by
\cite[Corollary~20.28]{Livre1}, we deduce from
\cite[Corollary~25.5(i)]{Livre1} that $\lag$ is maximally monotone.
Further, since \eqref{e:9} has a solution, $\zer\lag\neq\emp$. Set
\begin{equation}
\label{e:2488}
(\forall i\in\BI)(\forall n\in\NN)\quad
\overline{\delta}_i(n)=
\max\menge{j\in\NN}{j\leq n\;\:\text{and}\;\:i\in\BI_j}
\quad\text{and}\quad
\delta_i(n)=d_i\big(\overline{\delta}_i(n)\big),
\end{equation}
and define
\begin{equation}
\label{e:kern}
(\forall n\in\NN)\quad
\boldsymbol{K}_n\colon\XXX\to\XXX\colon\boldsymbol{x}\mapsto
\big(\gamma_{i,\delta_i(n)}^{-1}x_i\big)_{i\in\BI}-
\boldsymbol{Q}\boldsymbol{x}.
\end{equation}
In addition, let $\chi$ be a Lipschitz constant of
$\boldsymbol{Q}$.
Then, the operators $(\boldsymbol{K}_n)_{n\in\NN}$ are
Lipschitzian with constant
$\beta=\sqrt{2(\varepsilon^{-2}+\chi^2)}$.
At the same time, for every $n\in\NN$,
we derive from \eqref{e:kern} and \eqref{e:3021} that
\begin{align}
\label{e:2068}
(\forall\boldsymbol{x}\in\XXX)
(\forall\boldsymbol{y}\in\XXX)\quad
\scal{\boldsymbol{x}-\boldsymbol{y}}{
\boldsymbol{K}_n\boldsymbol{x}-
\boldsymbol{K}_n\boldsymbol{y}}
&=\sum_{i\in\BI}\gamma_{i,\delta_i(n)}^{-1}\|x_i-y_i\|^2-
\scal{\boldsymbol{x}-\boldsymbol{y}}{
\boldsymbol{Q}\boldsymbol{x}-\boldsymbol{Q}\boldsymbol{y}}
\nonumber\\
&\geq\sum_{i\in\BI}(\chi_i+\alpha)\|x_i-y_i\|^2-
\sum_{i\in\BI}\chi_i\|x_i-y_i\|^2
\nonumber\\
&=\alpha\|\boldsymbol{x}-\boldsymbol{y}\|^2
\end{align}
and, in turn, that $\boldsymbol{K}_n$ is $\alpha$-strongly monotone
and maximally monotone \cite[Corollary~20.28]{Livre1}.
Hence, for every $n\in\NN$, \cite[Proposition~22.11(ii)]{Livre1}
implies that there exists $\widetilde{\boldsymbol{x}}_n\in\XXX$
such that
\begin{equation}
\label{e:2138}
\big(\gamma_{i,\delta_i(n)}^{-1}x_{i,\overline{\delta}_i(n)}^*
\big)_{i\in\BI}
=\boldsymbol{K}_n\widetilde{\boldsymbol{x}}_n.
\end{equation}
Therefore, we infer from \eqref{e:alg}, \eqref{e:2488},
\eqref{e:s}, and \eqref{e:kern} that
\begin{align}
(\forall n\in\NN)\quad
\boldsymbol{p}_n
&=\big(p_{i,\overline{\delta}_i(n)}\big)_{i\in\BI}
\label{e:0816}
\\
&=\big(J_{\gamma_{i,\delta_i(n)}P_i}
x_{i,\overline{\delta}_i(n)}^*\big)_{i\in\BI}
\nonumber\\
&=(\boldsymbol{K}_n+\lag)^{-1}\big(
\gamma_{i,\delta_i(n)}^{-1}x_{i,\overline{\delta}_i(n)}^*
\big)_{i\in\BI}
\nonumber\\
&=(\boldsymbol{K}_n+\lag)^{-1}
\big(\boldsymbol{K}_n\widetilde{\boldsymbol{x}}_n\big).
\end{align}
On the other hand, by \eqref{e:alg}, \eqref{e:2488},
\eqref{e:2138}, \eqref{e:0816}, and \eqref{e:kern},
\begin{align}
\label{e:2376}
(\forall n\in\NN)\quad\boldsymbol{s}_n^*
&=\boldsymbol{p}_n^*+\boldsymbol{Q}\boldsymbol{p}_n
\nonumber\\
&=\big(p_{i,\overline{\delta}_i(n)}^*\big)_{i\in\BI}+
\boldsymbol{Q}\boldsymbol{p}_n
\nonumber\\
&=\big(\gamma_{i,\delta_i(n)}^{-1}\big(
x_{i,\overline{\delta}_i(n)}^*-
p_{i,\overline{\delta}_i(n)}\big)\big)_{i\in\BI}+
\boldsymbol{Q}\boldsymbol{p}_n
\nonumber\\
&=\big(\gamma_{i,\delta_i(n)}^{-1}x_{i,\overline{\delta}_i(n)}^*
\big)_{i\in\BI}-
\big(\gamma_{i,\delta_i(n)}^{-1}
p_{i,\overline{\delta}_i(n)}\big)_{i\in\BI}+
\boldsymbol{Q}\boldsymbol{p}_n
\nonumber\\
&=\boldsymbol{K}_n\widetilde{\boldsymbol{x}}_n-
\boldsymbol{K}_n\boldsymbol{p}_n.
\end{align}
Thus, \eqref{e:alg} can be recast as
\begin{equation}
\begin{array}{l}
\text{for}\;n=0,1,\ldots\\
\left\lfloor
\begin{array}{l}
\boldsymbol{p}_n=(\boldsymbol{K}_n+\lag)^{-1}
\big(\boldsymbol{K}_n\widetilde{\boldsymbol{x}}_n\big)\\
\boldsymbol{s}_n^*=\boldsymbol{K}_n\widetilde{\boldsymbol{x}}_n-
\boldsymbol{K}_n\boldsymbol{p}_n\\
\text{if}\;\scal{\boldsymbol{p}_n
-\boldsymbol{x}_n}{\boldsymbol{s}_n^*}<0\\
\left\lfloor
\begin{array}{l}
\boldsymbol{x}_{n+1}=
\boldsymbol{x}_n+
\dfrac{\lambda_n\scal{\boldsymbol{p}_n-
\boldsymbol{x}_n}{\boldsymbol{s}_n^*}}
{\|\boldsymbol{s}_n^*\|^2}\,\boldsymbol{s}_n^*\\
\end{array}
\right.\\
\text{else}\\
\left\lfloor
\begin{array}{l}
\boldsymbol{x}_{n+1}=\boldsymbol{x}_n.
\end{array}
\right.\\[2mm]
\end{array}
\right.\\
\end{array}
\end{equation}
Therefore, \cite[Theorem~4.2(i)]{Jmaa20} yields
$\sum_{n\in\NN}\|\boldsymbol{x}_{n+1}-\boldsymbol{x}_n\|^2<\pinf$.
On the one hand, in view of \cite[Lemma~A.3]{Sadd20},
we deduce from \eqref{e:1928} and \eqref{e:2488} that
$(\forall i\in\BI)$
$\boldsymbol{x}_{\delta_i(n)}-\boldsymbol{x}_n\to\boldsymbol{0}$.
On the other hand, for every $n\in\NN$, every
$\boldsymbol{x}\in\XXX$, and every $\boldsymbol{y}\in\XXX$, we
deduce from \eqref{e:2068} and the Cauchy--Schwarz inequality that
$\alpha\|\boldsymbol{x}-\boldsymbol{y}\|^2
\leq\scal{\boldsymbol{x}-\boldsymbol{y}}{
\boldsymbol{K}_n\boldsymbol{x}-
\boldsymbol{K}_n\boldsymbol{y}}
\leq\|\boldsymbol{x}-\boldsymbol{y}\|\,
\|\boldsymbol{K}_n\boldsymbol{x}-
\boldsymbol{K}_n\boldsymbol{y}\|$,
from which it follows that
\begin{equation}
\label{e:1890}
\alpha\|\boldsymbol{x}-\boldsymbol{y}\|
\leq\|\boldsymbol{K}_n\boldsymbol{x}-
\boldsymbol{K}_n\boldsymbol{y}\|.
\end{equation}
Hence,  using
\eqref{e:2138}, \eqref{e:alg}, \eqref{e:kern}, and the fact that
$\boldsymbol{Q}$ is $\chi$-Lipschitzian,
we get
\begin{align}
\label{e:1891}
\alpha^2\|\widetilde{\boldsymbol{x}}_n-\boldsymbol{x}_n\|^2
&\leq\|\boldsymbol{K}_n\widetilde{\boldsymbol{x}}_n-
\boldsymbol{K}_n\boldsymbol{x}_n\|^2
\nonumber\\
&=\big\|\big(\gamma_{i,\delta_i(n)}^{-1}\big(
x_{i,\delta_i(n)}-
\gamma_{i,\delta_i(n)}Q_i\boldsymbol{x}_{\delta_i(n)}
\big)\big)_{i\in\BI}-
\big(\gamma_{i,\delta_i(n)}^{-1}x_{i,n}-
Q_i\boldsymbol{x}_n\big)_{i\in\BI}\big\|^2
\nonumber\\
&=\sum_{i\in\BI}\big\|
\gamma_{i,\delta_i(n)}^{-1}\big(x_{i,\delta_i(n)}-x_{i,n}\big)+
\big(Q_i\boldsymbol{x}_n- Q_i\boldsymbol{x}_{\delta_i(n)}\big)
\big\|^2
\nonumber\\
&\leq\sum_{i\in\BI}2\big(\varepsilon^{-2}\big\|
x_{i,\delta_i(n)}-x_{i,n}\big\|^2+\big\|
Q_i\boldsymbol{x}_n- Q_i\boldsymbol{x}_{\delta_i(n)}\big\|^2
\big)
\nonumber\\
&\leq\sum_{i\in\BI}2(\varepsilon^{-2}+\chi^2)
\big\|\boldsymbol{x}_{\delta_i(n)}-\boldsymbol{x}_n\big\|^2
\nonumber\\
&\to 0.
\end{align}
Thus, we conclude via 
\cite[Theorem~4.2(ii) and Remark~4.3]{Jmaa20}
that $(\boldsymbol{x}_n)_{n\in\NN}$ converges weakly to a point in
$\zer\lag$, i.e., a solution to \eqref{e:9}.
Further, it is shown in the proof of
\cite[Theorem~4.2(ii)]{Jmaa20} that
$\boldsymbol{K}_n\widetilde{\boldsymbol{x}}_n
-\boldsymbol{K}_n\boldsymbol{p}_n \to\boldsymbol{0}$.
Hence, we derive from \eqref{e:1890} and \eqref{e:1891} that
$\|\boldsymbol{x}_n-\boldsymbol{p}_n\|
\leq\|\boldsymbol{x}_n-\widetilde{\boldsymbol{x}}_n\|
+\|\widetilde{\boldsymbol{x}}_n
-\boldsymbol{p}_n\|
\leq\|\boldsymbol{x}_n
-\widetilde{\boldsymbol{x}}_n\|+(1/\alpha)
\|\boldsymbol{K}_n\widetilde{\boldsymbol{x}}_n
-\boldsymbol{K}_n\boldsymbol{p}_n\|\to 0$.
\end{proof}

We are now ready to prove Theorem~\ref{t:1}.

\begin{proof}
Consider the system of monotone inclusions
\begin{equation}
\label{e:5418}
\text{find}\;\:
(\boldsymbol{x},\boldsymbol{v}^*)\in
\HHH\oplus\GGG\;\:\text{such that}\;\:
\begin{cases}
(\forall i\in I)\;\;0\in\partial\varphi_i({x}_i)+
\pnabla{i}\boldsymbol{f}_{\!i}(\boldsymbol{x})+
\sum_{k\in K}\Pi_i(\boldsymbol{L}_k^*{v}_k^*)
\\
(\forall k\in K)\;\;0\in\partial g_k^*({v}_k^*)
-\boldsymbol{L}_k\boldsymbol{x}.
\end{cases}
\end{equation}
We assume, without loss of generality, that
$I$ and $K$ are disjoint subsets of $\NN$.
Then, in view of \eqref{e:3851},
\eqref{e:5418} is a special case of \eqref{e:9} where 
$\BI=I\cup K$ and
\begin{equation}
\label{e:8718}
\begin{cases}
(\forall i\in I)\;\;\XX_i=\HH_i\;\:\text{and}\;\:
P_i=\partial\varphi_i\\
(\forall k\in K)\;\;\XX_k=\GG_k\;\:
\text{and}\;\:P_k=\partial g_k^*\\
\boldsymbol{Q}\colon(\boldsymbol{x},\boldsymbol{v}^*)
\mapsto(\boldsymbol{G}\boldsymbol{x}+
\boldsymbol{L}^*\boldsymbol{v}^*,{-}\boldsymbol{L}\boldsymbol{x}).
\end{cases}
\end{equation}
Note that $\boldsymbol{Q}$ is Lipschitzian and that,
for every $(\boldsymbol{x},\boldsymbol{v}^*)\in\HHH\oplus\GGG$
and every $(\boldsymbol{y},\boldsymbol{w}^*)\in\HHH\oplus\GGG$,
it follows from \eqref{e:Glip} that
\begin{equation}
\sscal{(\boldsymbol{x},\boldsymbol{v}^*)-
(\boldsymbol{y},\boldsymbol{w}^*)}{
\boldsymbol{Q}(\boldsymbol{x},\boldsymbol{v}^*)-
\boldsymbol{Q}(\boldsymbol{y},\boldsymbol{w}^*)}
=\scal{\boldsymbol{x}-\boldsymbol{y}}{
\boldsymbol{G}\boldsymbol{x}-\boldsymbol{G}\boldsymbol{y}}
\leq\sum_{i\in I}\chi_i\|x_i-y_i\|^2.
\end{equation}
In addition, for every $n\in\NN$ and every $k\in K_n$,
upon setting $z_{k,n}^*=y_{k,n}^*/\mu_{k,d_k(n)}$,
we deduce from \eqref{e:1494} that
\begin{equation}
z_{k,n}^*=v_{k,d_k(n)}^*+
\mu_{k,d_k(n)}^{-1}\boldsymbol{L}_k\boldsymbol{x}_{d_k(n)}
\end{equation}
and from \cite[Theorem~14.3(ii) and Example~23.3]{Livre1} that
\begin{equation}
b_{k,n}^*
=\prox_{\mu_{k,d_k(n)}^{-1}g_k^*}z_{k,n}^*
=J_{\mu_{k,d_k(n)}^{-1}P_k}z_{k,n}^*
\quad\text{and}\quad
b_{k,n}=\mu_{k,d_k(n)}(z_{k,n}^*-b_{k,n}^*).
\end{equation}
Hence, \eqref{e:1494} is a realization of \eqref{e:alg}
in the context of \eqref{e:8718} with
\begin{equation}
\label{e:7325}
\big[\;(\forall n\in\NN)\;\;\BI_n=I_n\cup K_n\;\big]
\quad\text{and}\quad
\big[\;(\forall k\in K)\;\;\chi_k=0\;\:\text{and}\;\:
\gamma_{k,n}=\mu_{k,n}^{-1}\:\big].
\end{equation}
Moreover, we observe that $\emp\neq\boldsymbol{Z}$
is the set of solutions to \eqref{e:5418}.
Hence, Proposition~\ref{p:10}\ref{p:10ii} implies that
$(\boldsymbol{x}_n,\boldsymbol{v}_n^*)_{n\in\NN}$
converges weakly to a point
$(\boldsymbol{x},\boldsymbol{v}^*)\in\boldsymbol{Z}$. By
Proposition~\ref{p:1}, $\boldsymbol{x}$ solves \eqref{e:nash}.
\end{proof}

\begin{remark}
By invoking \cite[Theorem~4.8]{Jmaa20} and arguing as in the proof
of Proposition~\ref{p:10}, we obtain a strongly convergent
counterpart of Proposition~\ref{p:10} which, in turn, yields
a strongly convergent version of Theorem~\ref{t:1}.
\end{remark}

\begin{remark}
\label{r:3}
Consider the proof of Theorem~\ref{t:1}.
We deduce from Proposition~\ref{p:10}\ref{p:10i} that
$\boldsymbol{x}_n-\boldsymbol{a}_n\to\boldsymbol{0}$
and, thus, that $\boldsymbol{a}_n\weakly\boldsymbol{x}$.
Moreover, by \eqref{e:1494}, given $i\in I$, the sequence
$(a_{i,n})_{n\in\NN}$ lies in 
$\dom\partial\varphi_i\subset\dom\varphi_i$.
In particular, if a constraint on $x_i$ is enforced via
$\varphi_i=\iota_{C_i}$, then $(a_{i,n})_{n\in\NN}$ converges 
weakly to the $i$th component of a solution $\boldsymbol{x}$
while being feasible in the sense that 
$C_i\ni a_{i,n}\weakly{x}_i$.
\end{remark}

\end{document}